\theoremstyle{plain}
\newtheorem{thm}{Theorem}[section]
\newtheorem{lem}[thm]{Lemma}
\newtheorem{cor}[thm]{Corollary}
\newtheorem{utv}[thm]{Proposition}
\theoremstyle{definition}
\newtheorem{dfn}{Definition}[section]
\newtheorem{ex}[thm]{Example}
\begin{document}
\title{Elliptic function of level 4}
\author{Elena~Yu.~Bunkova}
\address{Steklov Mathematical Institute, Russian Academy of Sciences}
\email{bunkova@mi.ras.ru}
\thanks{This work is supported by the Russian Science Foundation under grant 14-50-00005.}

\maketitle

\begin{abstract}
The work is dedicated to the theory of elliptic functions of level $n$.
An elliptic function of level $n$ determines a Hirzebruch genus that is called elliptic genus of level $n$. 
Elliptic functions of level $n$ are also interesting as solutions of Hirzebruch functional equations.

The elliptic function of level $2$ is the Jacobi elliptic sine.
It determines the famous Ochanine--Witten genus.
It is the exponential of the universal formal group of the form
\[
 F(u,v)=\frac{u^2 -v^2}{u B(v) - v B(u)}, \quad B(0) = 1.
\]
The elliptic function of level $3$ is the exponential of the universal formal group of the form
\[
 F(u,v)=\frac{u^2 A(v) -v^2 A(u)}{u A(v)^2 - v A(u)^2}, \qquad A(0) = 1, \quad A''(0) = 0.
\]
In this work we have obtained that the elliptic function of level $4$ 
is the exponential of the universal formal group of the form
\[
 F(u,v)=\frac{u^2 A(v) -v^2 A(u)}{u B(v)-v B(u)}, \text{ where } A(0) = B(0) = 1, 
\]
and for $B'(0) = A''(0) = 0, A'(0) = A_1,  B''(0) = 2 B_2$ the relation holds
\[
(2 B(u) + 3 A_1 u)^2 = 4 A(u)^3 - (3 A_1^2 - 8 B_2) u^2 A(u)^2. 
\]
To prove this result we have expressed the elliptic function of level $4$
in terms of Weierstrass elliptic functions.
\end{abstract}

\section{Introduction}

\begin{dfn}
We call a \emph{non-degenerate elliptic function of level $n$}
a meromorphic function $f_n(x)$ in $\mathbb{C}$ with quasiperiodic properties
\[
 f_n(x + 2 \omega_1) = f_n(x), \qquad f_n(x + 2 \omega_2) = \epsilon_n f_n(x), \qquad \omega_2 / \omega_1 \notin \mathbb{R},
\]
where $\epsilon_n$ is a primitive $n$-th root of unity,
that has a single simple pole on in the periods parallelogram with generators $(2 \omega_1, 2 \omega_2)$. 
\end{dfn}

From elliptic functions theory (see \cite{WW, Iso}) it follows that such a function has a single simple zero on the periods parallelogram.
It is convenient to assume that this zero is at the origin, and we will use the normalization
\[
 f_n(0) = 0, \quad f_n'(0) = 1.
\]

The interest in elliptic functions of level $n$ is related to the fact that such functions determine elliptic Hirzebruch genera of level $N$,
where $n \mid N$ (see \cite{Hirtz, Hirzebruch-88}).
In particular, the function $f_2(x)$ is the elliptic sine and determines the famous Oshanine--Witten elliptic genus \cite{Osh}.  
In this work we propose an approach to the study of elliptic functions of level $n$,
which uses that they are elliptic functions with periods $2 \omega_1$, $2 n \omega_2$. We demonstrate this approach for $n = 2, 3, 4$.

Non-degenerate elliptic functions of level $n$ exist for any $n \geqslant 2$.
They are expressed in terms of the Baker--Akhiezer function $\Phi(x;z)$: we have
\begin{equation} \label{fKr}
f_{Kr}(x) = {\exp(\alpha x) \over \Phi(x;z)} =  { \sigma(x) \sigma(z) \over \sigma(z - x)} \exp(\alpha x - \zeta(z) x),
\end{equation}
and $f_n(x) = f_{Kr}(x)$
for
$z = 2 {k \over n} \omega_1$, $\alpha = - 2 {k \over n} \zeta(\omega_1) + \zeta(z)$, $0 < k < n$, $\text{GCD}(k,n) = 1$.
Here $\sigma(x)$ and $\zeta(x)$ are Weierstrass functions with quasiperiods $2 \omega_1$, $2 \omega_2$.
From this expression it follows that the simple pole of $f_n(x)$ is at $x = z = 2 {k \over n} \omega_1$.  

An expression of the form \eqref{fKr} for functions determining elliptic denera of level $N$ was obtained in \cite{Krichever-90}.
Herewith $z = 2 {k_1 \over N} \omega_1 + 2 {k_2 \over N} \omega_2$ is a point of order $N$ with respect to the lattice with generators $(2 \omega_1, 2 \omega_2)$,
and $\alpha = - 2 {k_1 \over N} \zeta(\omega_1) - 2 {k_2 \over N} \zeta(\omega_2) + \zeta(z)$.
By replacing the generators of the lattice $(2 \omega_1, 2 \omega_2)$ we come to an equivalent expression of the form \eqref{fKr} where $k_2 = 0$,
that is
$z = 2 {k_1 \over N} \omega_1 = 2 {k \over n} \omega_1$, $\alpha = - 2 {k \over n} \zeta(\omega_1) + \zeta(z)$, $0 < k < n$, $\text{GCD}(k,n) = 1$.

The function $f_{Kr}(x)$ can be considered as a four-parametric function with parameters $\alpha, \beta, \gamma, \lambda$ (see Corollary \ref{Kr}).
The non-degeneracy condition is $\Delta = \lambda^3 - 27 (4 \beta^3 - \lambda \beta - \gamma^2)^2 \ne 0$.
We show that each of the functions $f_2(x)$, $f_3(x)$, $f_4(x)$ can be presented as $f_{Kr}(x)$,
where in each case the parameters belong to a two-parametric subset in the space of parameters $(\alpha, \beta, \gamma, \lambda) \in \mathbb{C}^4$.
By removing the restriction $\Delta \ne 0$ we obtain \emph{elliptic functions of level $n$}.

The elliptic function of level $n$ for $n \mid N$ is a solution of the \emph{$N$-th special Hirzebruch functional equation}
(see \cite{Hirtz},  4.5 and Appendix III)
\begin{equation} \label{fe1}
	\sum_{i = 1}^{N} \prod_{j \ne i} { 1 \over f(x_j - x_i)} = 0.
\end{equation}
(``Special'' refers to a Hirzebruch equation with zero on the right hand side. 
In the general case, the right hand side of a Hirzebruch equation is a constant.)
In this work, we consider solutions of this equation with initial conditions $f(0) = 0, f'(0) = 1$.
Such solutions determine special $\mathbb{C}P^{N-1}$-multiplicative Hirzebruch genera.
They are rigid on $\mathbb{C}P^{N-1}$ (see \cite{BBZam, BBHi2, BN}).

For topological applications it is important to know the coefficient rings of universal formal groups
whose exponentials are elliptic functions of level $n$.
For $n = 2,3$ such formal groups are described in~\cite{BBU}. In the general case it is known (see \cite{Buc90}) that
these formal groups are specializations of 
\emph{Buchstaber formal group}
\begin{equation} \label{Buch}
F(u,v) =\frac{u^2A(v)-v^2A(u)}{uB(v)-vB(u)}, \text{ where } A(0) = B(0) = 1.
\end{equation}

In theorem \ref{Tmain} we have found the necessary and sufficient conditions for the formal series $A(u)$ and $B(u)$
so that the exponential of the formal group~\eqref{Buch} is an elliptic function of level $4$.

The author is grateful to V.\,M.\,Buchstaber for raising the problem and
to S.\,O.\,Gorchinskiy for helpful remarks during the preparation of this work for publication.

\section{Weierstrass and Baker--Akhiezer functions}

An \emph{elliptic function} is a meromorphic function $f(x)$ on $\mathbb{C}$ with periodic properties
\begin{equation} \label{per}
 f(x + 2 \omega_1) = f(x), \qquad f(x + 2 \omega_2) = f(x), \qquad \omega_2 / \omega_1 \notin \mathbb{R}.
\end{equation}

The points $2 n \omega_1 + 2 m \omega_2$, where $n,m \in \mathbb{Z}$,
form the periods lattice $\langle 2 \omega_1, 2 \omega_2 \rangle$.
The properties \eqref{per} are equivalent to $f(x + \omega) = f(x)$ for any
$\omega \in \langle 2 \omega_1, 2 \omega_2 \rangle$.
The latter property does not depend on the choice of generators $(2 \omega_1, 2 \omega_2)$ in the periods lattice.
The function $f(x)$ with properties \eqref{per}
can be regarded as a function on the Jacobian of the elliptic curve $\mathbb{C}/\langle 2 \omega_1, 2 \omega_2 \rangle$.

\emph{The Weierstrass function $\wp(x)$} (see \cite{WW, Iso}) is defined as an elliptic function 
with poles only in lattice $\langle 2 \omega_1, 2 \omega_2 \rangle$  points
and a series expansion at the origin of the form
\begin{equation} \label{nu}
 \wp(x) = {1 \over x^2} + O(x).
\end{equation}

The functions $\wp(x)$ and $\wp'(x)$ are connected by \emph{Weierstrass equation}
\begin{equation} \label{wp}
\wp'(x)^2 = 4 \wp(x)^3 - g_2 \wp(x) - g_3.
\end{equation}
The discriminant of the corresponding elliptic curve is $\Delta = g_2^3 - 27 g_3^2$.
In the conditions of \eqref{per} the curve is non-singular, i.e. $\Delta \ne 0$. 

It is convenient to take the invariants $g_2, g_3$ as parameters of the $\wp$-function.
In this case we use the notation $\wp(x; g_2, g_3)$. The equation \eqref{wp} with initial conditions \eqref{nu}
determines the coefficients of the series expansion of
$\wp(x; g_2, g_3)$ as polynomials in $(g_2, g_3) \in \mathbb{C}^2$.
This allows to define Weierstrass functions corresponding to the degenerate case $\Delta = 0$.

The function $\wp(x)$ is even. 

\emph{Weierstrass function $\zeta(x)$} is a meromorphic function on $\mathbb{C}$ determined by the conditions
\[
	\zeta'(x) = - \wp(x), \qquad \zeta(x)  = {1 \over x} + O(x).
\]

\emph{Weierstrass function $\sigma(x)$} is an entire function on $\mathbb{C}$ determined by the conditions
\[
	\left(\ln \sigma(x)\right)' = \zeta(x), \qquad \sigma(x) = x  + O(x^2).
\]

The functions $\zeta(x)$ and $\sigma(x)$ are odd and quasiperiodic:
\begin{equation} \label{sz}
  \zeta(x + 2 \omega_k) = \zeta(x) + 2 \eta_k,\qquad \sigma(x +  2 \omega_k) = - e^{2 \eta_k (x + \omega_k)} \sigma(x),
\end{equation}
where $\eta_k = \zeta(\omega_k)$ and the \emph{Legendre identity}
$2 \eta_1 \omega_2 - 2 \eta_2 \omega_1 = \pi i$ holds.

\emph{Baker--Akhiezer function} \cite{Krichever-90} is defined by the expression
\[
 \Phi(x; z) = {\sigma(z - x) \over \sigma(x) \sigma(z)} e^{\zeta(z) x}.
\]
Further we omit the parameter $z$ for brevity and write $\Phi(x)$ instead of $\Phi(x; z)$.

From \eqref{sz} we get
$\Phi(x + 2 \omega_k) = \Phi(x) \exp{(2 \zeta(z) \omega_k - 2 \eta_k z)}$. 

The Baker--Akhiezer function is a solution of the \emph{Lame equation} (see \cite{WW, Krichever-90})
\begin{equation} \label{Lame}
\Phi''(x) = 2 \wp(x) \Phi(x) + \wp(z) \Phi(x).
\end{equation}

The logarithmic derivative of the Baker--Akhiezer function is an elliptic function and we have 
(see \cite{WW}, 20.53, Example 2)
\begin{equation} \label{log}
{\Phi'(x) \over \Phi(x)} = {1 \over 2} {\wp'(x) + \wp'(z) \over \wp(x) - \wp(z)}.
\end{equation}

\begin{lem} \label{lem1}
The differential equation holds
\begin{equation} \label{Phieq}
 \Phi(x) \Phi'''(x) - 3 \Phi'(x) \Phi''(x) = - 6 \wp(z) \Phi(x) \Phi'(x) - 2 \wp'(z) \Phi(x)^2.
\end{equation} 
\end{lem}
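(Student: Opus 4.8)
The plan is to derive \eqref{Phieq} purely algebraically from the Lame equation \eqref{Lame} together with the logarithmic-derivative formula \eqref{log}, so that neither an integration nor the explicit product form of $\Phi$ is needed.

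First I would rewrite \eqref{Lame} compactly as $\Phi'' = (2\wp(x) + \wp(z))\Phi$ and differentiate once in $x$, using $\tfrac{d}{dx}\wp(x) = \wp'(x)$, to get
\[
\Phi''' = 2\wp'(x)\Phi + (2\wp(x) + \wp(z))\Phi'.
\]
Next I would assemble the left-hand combination $\Phi\Phi''' - 3\Phi'\Phi''$, inserting this expression for $\Phi'''$ and reusing \eqref{Lame} for $\Phi''$ in the term $3\Phi'\Phi''$. The contributions $(2\wp(x)+\wp(z))\Phi\Phi'$ then partially cancel, leaving
\[
\Phi\Phi''' - 3\Phi'\Phi'' = 2\wp'(x)\Phi^2 - 2(2\wp(x) + \wp(z))\Phi\Phi'.
\]

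The key step is to remove the remaining dependence on $\wp(x)$ and $\wp'(x)$, which must not appear on the right-hand side of \eqref{Phieq}. For this I would use \eqref{log} in the cleared form $2(\wp(x) - \wp(z))\Phi\Phi' = (\wp'(x) + \wp'(z))\Phi^2$, obtained by multiplying \eqref{log} through by $2\Phi^2(\wp(x) - \wp(z))$. Solving this relation for $4\wp(x)\Phi\Phi'$ and substituting it into the previous display makes the $2\wp'(x)\Phi^2$ terms cancel and collects everything that survives into precisely $-6\wp(z)\Phi\Phi' - 2\wp'(z)\Phi^2$, which is the asserted identity.

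I do not anticipate a genuine obstacle: the argument is a finite algebraic manipulation, and the only point demanding care is the bookkeeping of the cancellations — in particular confirming that every occurrence of $\wp(x)$ and $\wp'(x)$ drops out, leaving a right-hand side depending only on $\wp(z)$ and $\wp'(z)$ as claimed. One could instead verify \eqref{Phieq} by matching Laurent expansions at the poles of $\Phi$ and invoking ellipticity, but the direct substitution above is shorter and self-contained.
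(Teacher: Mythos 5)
Your proof is correct and is essentially the paper's own argument: both derive \eqref{Phieq} by pure algebraic elimination from the same three relations --- the Lame equation \eqref{Lame}, its $x$-derivative, and the cleared form of the logarithmic-derivative identity \eqref{log}. The only difference is the direction of elimination (the paper solves the first two relations for $\wp(x)$ and $\wp'(x)$ and substitutes into the third, whereas you substitute the first two into the combination $\Phi\Phi''' - 3\Phi'\Phi''$ and use the third to cancel $\wp(x)$ and $\wp'(x)$), which is an inessential reorganization of the same computation.
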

\begin{proof}
The equation \eqref{Lame}, its derivative in $x$, and the equation \eqref{log} give the system
\begin{align*}
&\Phi''(x) = 2 \wp(x) \Phi(x) + \wp(z) \Phi(x), \\
&\Phi'''(x) = 2 \wp'(x) \Phi(x) + (2 \wp(x) + \wp(z)) \Phi'(x), \\
&2 (\wp(x) - \wp(z)) \Phi'(x) = (\wp'(x) + \wp'(z)) \Phi(x). 
\end{align*}
By expressing $\wp(x)$ and $\wp'(x)$ from the
first and second equation respectively, we obtain
\begin{align*}
\wp(x) &= {\Phi''(x) - \wp(z) \Phi(x) \over 2 \Phi(x)}, & \wp'(x) &= {\Phi'''(x) \Phi(x) - \Phi''(x) \Phi'(x) \over 2 \Phi(x)^2}.
\end{align*}
By substituting these two relations into the third equation, we get \eqref{Phieq}.
\end{proof}

\section{Buchstaber formal group}

Let $R$ be a commutative ring with unit.

A commutative one-dimensional \emph{formal group} over $R$ is a formal series $F(u,v) \in R[[u,v]]$
satisfying the conditions
\begin{align*}
 F(u, 0) &= u, & F(F(u,v), w) &= F(u, F(v,w)), & F(u, v) &= F(v,u). 
\end{align*}
See \cite{Haz} for the general theory of formal groups.
A modern approach and applications to Hirzebruch genera can be found in \cite{BP}.

An \emph{exponential} of the formal group $F(u, v)$ is a formal series
$f(x) \in R \otimes \mathbb{Q}[[x]]$ with initial conditions $f(0)=0, f'(0) = 1$ and addition law
\begin{equation} \label{f}
f(x+y) = F(f(x),f(y)).
\end{equation}
A \emph{logarithm} $g(u)$ is a series, functionally inverse to $f(x)$.
Over rings $R$ without torsion the formal group (as series with coefficients in $R$) can be recovered from its exponential
(a series with coefficients in $R \otimes \mathbb{Q}$).

A \emph{Buchstaber formal group} over $R$ is a formal group of the form \eqref{Buch}
\[
F(u,v) =\frac{u^2 A(v) - v^2 A(u)}{u B(v) - v B(u)},
\]
where $A(u), B(u) \in R[[u]]$ and $A(0) = B(0) = 1$.
Set $A(u) = 1 + \sum_k A_k u^k$, $B(u) = 1 + \sum_k B_k u^k$.
Note that the right hand side of \eqref{Buch} does not depend on the coefficients $A_2$ and $B_1$.
Therefore, we assume $A_2 = B_1 = 0$.

For the definition of \emph{universal formal group of a given form} see \cite{BBU, BU}. 
The ring of coefficients of the universal formal group of the form \eqref{Buch} is described in~\cite{BU}. 

In \cite{Buc90} it was shown that for the Buchstaber universal formal group the exponential is \eqref{fKr}.
We have (see \cite{BP, BBKrich})
$f_{Kr}(x) \in \mathbb{Q}[\alpha, \wp(z), \wp'(z), g_2][[x]]$.

\begin{cor} \label{Kr}
The function $f_{Kr}(x)$ is four-parametric with parameters $\alpha, \beta, \gamma, \lambda$,
where $\beta = \wp(z)$, $\gamma = \wp'(z)$, $\lambda = g_2$.
In this parameters
\begin{multline}\label{mith}
f_{Kr}(x) = x + \alpha x^2 + (\alpha^2 + \beta) {x^3 \over 2}
+ (\alpha^3 + 3 \alpha \beta - \gamma) {x^4 \over 3!} + \\+
(5 \alpha^4  + 30 \alpha^2 \beta + 45 \beta^2 - 20 \alpha \gamma - 3 \lambda) {x^5 \over 5!} + O(x^6), 
\end{multline}
$g_3 = 4 \beta^3 - \lambda \beta - \gamma^2$
and $\Delta = \lambda^3 - 27 (4 \beta^3 - \lambda \beta - \gamma^2)^2$.
\end{cor}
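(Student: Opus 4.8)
The plan is to avoid manipulating the product of Weierstrass sigma-functions directly and instead to work with the logarithmic derivative of $f_{Kr}$, which by \eqref{log} is an elementary rational expression in $\wp$ and $\wp'$. Writing $f_{Kr}(x) = \exp(\alpha x)/\Phi(x)$ and using \eqref{log}, I would first record the identity
\[
\frac{f_{Kr}'(x)}{f_{Kr}(x)} = \alpha - \frac{\Phi'(x)}{\Phi(x)} = \alpha - \frac{1}{2}\,\frac{\wp'(x) + \gamma}{\wp(x) - \beta},
\]
with $\beta = \wp(z)$ and $\gamma = \wp'(z)$. The whole computation then reduces to expanding the right-hand side as a Laurent series at the origin and integrating once.

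Next I would substitute the standard expansions $\wp(x) = x^{-2} + \tfrac{g_2}{20}x^2 + \tfrac{g_3}{28}x^4 + \cdots$ and $\wp'(x) = -2x^{-3} + \tfrac{g_2}{10}x + \tfrac{g_3}{7}x^3 + \cdots$, which are fixed by \eqref{nu} and \eqref{wp}. Factoring $x^{-3}$ out of the numerator and $x^{-2}$ out of the denominator turns the quotient into $x^{-1}$ times a ratio of ordinary power series, and a direct division yields
\[
\frac{f_{Kr}'(x)}{f_{Kr}(x)} = \frac{1}{x} + \alpha + \beta x - \frac{\gamma}{2}x^2 + \Bigl(\beta^2 - \frac{\lambda}{10}\Bigr)x^3 - \frac{\beta\gamma}{2}x^4 + \cdots,
\]
where I write $\lambda = g_2$. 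Note that $g_3$ does not enter through order $x^4$, which is the reason the final coefficients involve only $\alpha, \beta, \gamma, \lambda$. Integrating term by term and fixing the constant of integration by the normalization $f_{Kr}(x) = x + O(x^2)$ — immediate from \eqref{fKr} since $\sigma(x) = x + O(x^2)$ while $\sigma(z - x) \to \sigma(z)$ — gives $\ln f_{Kr}(x) = \ln x + \alpha x + \tfrac{\beta}{2}x^2 - \tfrac{\gamma}{6}x^3 + \tfrac{1}{4}(\beta^2 - \tfrac{\lambda}{10})x^4 - \tfrac{\beta\gamma}{10}x^5 + \cdots$. Exponentiating and collecting powers through $x^5$ should reproduce \eqref{mith}.

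Finally, the relation $g_3 = 4\beta^3 - \lambda\beta - \gamma^2$ is just the Weierstrass equation \eqref{wp} evaluated at $x = z$, and substituting it into $\Delta = g_2^3 - 27 g_3^2$ gives the stated formula for $\Delta$; the description of $f_{Kr}$ as four-parametric then follows from the cited fact that its coefficients lie in $\mathbb{Q}[\alpha, \wp(z), \wp'(z), g_2]$, since this last relation expresses $g_3$ through $\beta, \gamma, \lambda$. I expect the only genuine obstacle to be bookkeeping: the coefficient of $x^5$ in $f_{Kr}$ requires the $x^4$ Laurent coefficient above together with the contributions of $P, P^2, P^3, P^4$ in $\exp(P)$, where $P = \alpha x + \tfrac{\beta}{2}x^2 - \tfrac{\gamma}{6}x^3 + \cdots$, and one must check that the $\beta^2$, $\alpha\gamma$ and $\lambda$ terms assemble into $45\beta^2$, $-20\alpha\gamma$ and $-3\lambda$ after clearing $5! = 120$.
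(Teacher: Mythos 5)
Your proposal is correct, and it is worth noting that it supplies something the paper itself does not: the paper states Corollary \ref{Kr} with no proof at all, presenting it as a repackaging of cited results (the fact from \cite{Buc90} that $f_{Kr}$ is the exponential of the Buchstaber universal formal group, and the fact from \cite{BP, BBKrich} that $f_{Kr}(x) \in \mathbb{Q}[\alpha, \wp(z), \wp'(z), g_2][[x]]$), with the expansion \eqref{mith} left as an unstated computation. Your route through the logarithmic derivative is a clean way to actually carry out that computation: from \eqref{fKr} and \eqref{log} you get $f_{Kr}'/f_{Kr} = \alpha - \tfrac{1}{2}\,\bigl(\wp'(x)+\gamma\bigr)/\bigl(\wp(x)-\beta\bigr)$, and I have checked your Laurent expansion of this quantity and the subsequent exponentiation: the coefficient of $x^5$ indeed assembles to $\tfrac{1}{24}\alpha^4 + \tfrac{1}{4}\alpha^2\beta + \tfrac{3}{8}\beta^2 - \tfrac{1}{6}\alpha\gamma - \tfrac{1}{40}\lambda = \tfrac{1}{120}\bigl(5\alpha^4 + 30\alpha^2\beta + 45\beta^2 - 20\alpha\gamma - 3\lambda\bigr)$, matching \eqref{mith}; the identities for $g_3$ and $\Delta$ are, as you say, just \eqref{wp} at $x=z$ and the definition of the discriminant. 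One small improvement is available to you: you do not actually need to invoke the cited polynomiality fact for the four-parametric claim, since your own expansion proves it — every Laurent coefficient of $\wp$ and $\wp'$ is a rational polynomial in $g_2, g_3$ by \eqref{nu} and \eqref{wp}, so every coefficient of $f_{Kr}'/f_{Kr}$, hence of $f_{Kr}$, lies in $\mathbb{Q}[\alpha,\beta,\gamma,\lambda, g_3] = \mathbb{Q}[\alpha,\beta,\gamma,\lambda]$ after eliminating $g_3$ via $g_3 = 4\beta^3 - \lambda\beta - \gamma^2$. With that observation your argument is fully self-contained, which the paper's presentation is not.
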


\begin{cor}[from Lemma \ref{lem1}] \label{corcor}
The function \eqref{fKr} with parameters $\alpha, \beta, \gamma, \lambda$ satisfies the differential equation
\begin{equation} \label{feq}
f(x) f'''(x) - 3 f'(x) f''(x) = C_1 f'(x)^2 + C_2 f(x) f'(x) + C_3 f(x)^2, 
\end{equation}
where
$C_1 = - 6 \alpha$, $C_2 = 6 \alpha^2 - 6 \beta$, $C_3 = 2 \gamma + 6 \alpha \beta - 2 \alpha^3$.
\end{cor}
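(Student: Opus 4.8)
The plan is to deduce the equation \eqref{feq} for $f = f_{Kr}$ directly from the equation \eqref{Phieq} for $\Phi$ established in Lemma \ref{lem1}, using the relation \eqref{fKr} rewritten as $\Phi(x) = e^{\alpha x}/f(x)$. Concretely, I would substitute this expression for $\Phi$, together with its first three derivatives, into \eqref{Phieq} and then read off the resulting identity for $f$. It is convenient to organize the computation around the bilinear differential operator $h \mapsto h\,h''' - 3\,h'\,h''$ appearing on the left of both \eqref{Phieq} and \eqref{feq}, tracking how it responds to the two elementary transformations composing \eqref{fKr}: multiplication by an exponential and passage to the reciprocal.

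First I would handle the exponential factor. Writing $\Phi = e^{\alpha x}\phi$ with $\phi = 1/f$ and differentiating, a short calculation shows that the prefactor contributes an overall $e^{2\alpha x}$ together with explicit lower-order corrections, namely
\[
e^{-2\alpha x}\bigl(\Phi\,\Phi''' - 3\,\Phi'\,\Phi''\bigr) = \phi\,\phi''' - 3\,\phi'\,\phi'' - 6\alpha\,(\phi')^2 - 6\alpha^2\,\phi\,\phi' - 2\alpha^3\,\phi^2,
\]
while the right side of \eqref{Phieq} becomes $e^{2\alpha x}$ times $-6\wp(z)(\alpha\phi^2 + \phi\,\phi') - 2\wp'(z)\phi^2$. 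Next I would treat the reciprocal substitution $\phi = 1/f$, whose key feature is the clean identity
\[
\phi\,\phi''' - 3\,\phi'\,\phi'' = -\frac{1}{f^4}\bigl(f\,f''' - 3\,f'\,f''\bigr),
\]
obtained by differentiating $\phi = 1/f$ three times and cancelling; the auxiliary terms become $(\phi')^2 = (f')^2/f^4$, $\phi\,\phi' = -f'/f^3$, and $\phi^2 = 1/f^2$.

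With these reductions in hand, every term of \eqref{Phieq} carries a common factor $e^{2\alpha x}/f^4$. Cancelling it, equivalently multiplying through by $-f^4 e^{-2\alpha x}$, transforms \eqref{Phieq} into an identity of the shape \eqref{feq}. Grouping the surviving terms by $(f')^2$, $f\,f'$, and $f^2$ yields $C_1 = -6\alpha$, $C_2 = 6\alpha^2 - 6\wp(z)$, and $C_3 = 2\wp'(z) + 6\alpha\wp(z) - 2\alpha^3$, and substituting $\beta = \wp(z)$, $\gamma = \wp'(z)$ from Corollary \ref{Kr} gives precisely the stated coefficients.

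The argument is essentially a bookkeeping exercise, so I do not expect a genuine obstacle; the only point requiring care is keeping track of the several lower-order terms produced by the exponential and the reciprocal, and not losing a sign when clearing the factor $-f^4$. As an independent consistency check, one can compare the first few Taylor coefficients of \eqref{feq} against the explicit expansion \eqref{mith} of $f_{Kr}$.
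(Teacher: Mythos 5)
Your proof is correct and takes essentially the same route as the paper: both deduce \eqref{feq} from the equation \eqref{Phieq} of Lemma \ref{lem1} by direct substitution of the relation $f_{Kr}(x) = e^{\alpha x}/\Phi(x)$ and clearing a common nonvanishing factor, and your intermediate identities (the transformation of $h\,h'''-3\,h'\,h''$ under the exponential factor and under the reciprocal, and the resulting coefficients $C_1, C_2, C_3$) all check out. The only difference is organizational: the paper computes $f', f'', f'''$ in terms of $\Phi$ and reduces \eqref{feq} back to \eqref{Phieq}, whereas you substitute $\Phi = e^{\alpha x}/f$ into \eqref{Phieq} and derive \eqref{feq} forward, which is the same computation read in the opposite direction.
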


\begin{proof}
We have $f(x) = {\exp(\alpha x) \over \Phi(x)}$.
The expressions for derivatives are
\begin{align*}
f'(x) &= {\exp(\alpha x) \over \Phi(x)^2} \left(\alpha \Phi(x) - \Phi'(x)\right),\\
f''(x) &= {\exp(\alpha x) \over \Phi(x)^3} \left(\alpha^2 \Phi(x)^2 - 2 \alpha \Phi(x) \Phi'(x) + 2 \Phi'(x)^2 - \Phi(x) \Phi''(x)\right),\\
f'''(x) &= {\exp(\alpha x) \over \Phi(x)^4} \left(\alpha^3 \Phi(x)^3 - 3 \alpha^2 \Phi(x)^2 \Phi'(x) + 6 \alpha \Phi(x) \Phi'(x)^2 - 6 \Phi'(x)^3 + \right. \\
 & \left. \qquad \qquad \qquad \qquad + 6 \Phi(x) \Phi'(x) \Phi''(x) - 3 \alpha \Phi(x)^2 \Phi''(x) - \Phi(x)^2 \Phi'''(x) \right).
\end{align*}
We substitute these expressions into \eqref{feq} and reduce both sides by the factor ${\exp(2 \alpha x) \over \Phi(x)^5}$. 
After substituting $\beta = \wp(z)$, $\gamma = \wp'(z)$, the right hand side of the equation gives
\begin{multline*}
- 6 \alpha \Phi(x) \left(\alpha \Phi(x) - \Phi'(x)\right)^2 + 6 (\alpha^2- \wp(z)) \Phi(x)^2 \left(\alpha \Phi(x) - \Phi'(x)\right) + \\
+ 2 ( \wp'(z) + 3 \alpha \wp(z) - \alpha^3) \Phi(x)^3
= \\ =
- \Phi(x) \left( 2 (\alpha^3 - \wp'(z)) \Phi(x)^2 - 6 (\alpha^2 + \wp(z)) \Phi(x) \Phi'(x) + 6 \alpha \Phi'(x)^2 \right).
\end{multline*}

The left hand side of the equation gives
\begin{multline*}
\alpha^3 \Phi(x)^3 - 3 \alpha^2 \Phi(x)^2 \Phi'(x) - 3 \alpha \Phi(x)^2 \Phi''(x) + 6 \alpha \Phi(x) \Phi'(x)^2 - \\
- \Phi(x)^2 \Phi'''(x) + 6 \Phi(x) \Phi'(x) \Phi''(x) - 6 \Phi'(x)^3 - \\
- 3 (\alpha \Phi(x) - \Phi'(x)) (\alpha^2 \Phi(x)^2 - 2 \alpha \Phi(x) \Phi'(x) + 2 \Phi'(x)^2 - \Phi(x) \Phi''(x)) = \\ =
- \Phi(x) (\Phi(x) \Phi'''(x) - 3 \Phi'(x) \Phi''(x) + 2 \alpha^3 \Phi(x)^2 - 6 \alpha^2 \Phi(x) \Phi'(x) + 6 \alpha \Phi'(x)^2).
\end{multline*}
By equating and dividing by $- \Phi(x)$, we obtain the relation \eqref{Phieq}. 
\end{proof}

\begin{lem} \label{lemlem}
Two solutions of \eqref{feq} with initial conditions $f(0) = 0$, $f'(0) = 1$ coincide,
if initial terms of their expansions as power series in $x$
up to $x^5$ coincide.
\end{lem}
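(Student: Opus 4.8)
The plan is to treat the third-order equation \eqref{feq} as a recursion for the Taylor coefficients of $f$ and to show that, apart from a single exceptional coefficient, every coefficient beyond $x^5$ is forced by the preceding ones. Write $f(x) = \sum_{n \geq 1} a_n x^n$, where $a_0 = 0$ and $a_1 = 1$ by the initial conditions. Substituting this series into \eqref{feq} and collecting the coefficient of $x^n$ yields, for each $n \geq 0$, one scalar relation among the $a_k$ and the constants $C_1, C_2, C_3$.

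The key computation is to identify, in the coefficient of $x^n$, the unknown of highest index together with its numerical factor. On the left-hand side $f f''' - 3 f' f''$ the coefficient $a_{n+2}$ enters only through the product of $a_1 x$ with the $x^{n-1}$-term of $f'''$ and through the product of $a_1$ (the constant term of $f'$) with the $x^n$-term of $f''$; summing these two contributions gives the factor
\[
 n(n+1)(n+2) - 3(n+1)(n+2) = (n+1)(n+2)(n-3)
\]
in front of $a_{n+2}$. On the right-hand side $C_1 (f')^2 + C_2 f f' + C_3 f^2$ the highest index occurring in the coefficient of $x^n$ is $a_{n+1}$ (coming from $(f')^2$). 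Hence the order-$x^n$ relation has the shape $(n+1)(n+2)(n-3)\, a_{n+2} = P_n(C_1, C_2, C_3, a_1, \dots, a_{n+1})$ for an explicit polynomial $P_n$.

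I would then read off the consequences. For $n = 0, 1, 2$ the leading factor $(n+1)(n+2)(n-3)$ is nonzero, and the three resulting relations express $C_1, C_2, C_3$ (equivalently $a_2, a_3, a_4$) through the lowest coefficients. For $n = 3$ the factor $(n+1)(n+2)(n-3)$ vanishes, so $a_5$ drops out of the corresponding relation: this is precisely the free coefficient, and it is what makes data up to $x^5$ necessary. For every $n \geq 4$ the factor is again nonzero, so the relation determines $a_{n+2}$ — that is, each of $a_6, a_7, \dots$ — uniquely from $C_1, C_2, C_3$ and $a_1, \dots, a_{n+1}$. Thus the whole series $f$ is determined by $a_1, \dots, a_5$. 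Given two solutions whose expansions agree through $x^5$, the constants $C_i$ and the coefficients up to $a_5$ coincide, so by induction all higher coefficients coincide and the two solutions are equal.

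The only delicate point, and the step I would verify most carefully, is the leading-factor computation $(n+1)(n+2)(n-3)$: one must check both that no term of index higher than $a_{n+2}$ survives on the left and that the right-hand side contributes nothing of index $n+2$, so that the vanishing of this factor at $n = 3$ — and nowhere else among $n \geq 0$ — genuinely singles out $a_5$ as the sole undetermined coefficient.
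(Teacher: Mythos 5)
Your proof is correct and follows essentially the same route as the paper: both substitute the Taylor series into \eqref{feq}, extract the recursion whose leading factor is $(n+1)(n+2)(n-3)\,a_{n+2}$ (the paper writes it as $(k+1)k(k-4)f_k$ with $f_k$ the coefficient of $x^{k+1}$), observe that it vanishes exactly at the coefficient of $x^5$, and conclude that all higher coefficients are determined by those up to $x^5$ together with $C_1,C_2,C_3$, which are themselves fixed by the low-order coefficients. Your verification that no index higher than $a_{n+2}$ survives on either side is a useful elaboration of a step the paper states without derivation.
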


\begin{proof}
Let
 \[
  f(x) = x + f_1 x^2 + f_2 x^3 + f_3 x^4 + f_4 x^5 + \ldots + f_k x^{k+1} + \ldots
 \]
From initial terms of the series expansion of equation \eqref{feq} we get
\[
 C_1 = - 6 f_1, \quad C_2 = 12 (f_1^2 - f_2), \quad C_3 = -12 (f_1^3 - 2 f_1 f_2 + f_3).
\]
Further at $x^{k-1}$ for $k \geqslant 4$ we obtain expressions of the form
\[
(k+1) k (k-4) f_k = P_k(f_1, \ldots, f_{k-1}),
\]
where $P_k$ are polynomials. This gives expressions for $f_k$ in $f_1, \ldots, f_{k-1}$ for $k \geqslant 5$.
\end{proof}

\begin{lem} 
For the formal group \eqref{Buch} with exponential $f(x)$ the relations hold
\begin{align} \label{AB}
B(f(x)) &= f'(x) - A_1 f(x),\\
2 A(f(x)) &=  2 f'(x)^2 - f(x) f''(x) - A_1 f(x) f'(x) - 2 B_2 f(x)^2. \nonumber
\end{align}
\end{lem}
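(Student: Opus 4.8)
The plan is to exploit the defining addition law $f(x+y) = F(f(x), f(y))$ of the exponential and expand both sides as power series in $y$ about $y = 0$, treating $x$ (and hence $u := f(x)$) as a parameter. On the right-hand side I substitute $v := f(y) = y + f_1 y^2 + O(y^3)$ into the Buchstaber law \eqref{Buch}, using $A(0) = B(0) = 1$ together with the normalization $A_2 = B_1 = 0$, so that $A(v) = 1 + A_1 v + O(v^3)$ and $B(v) = 1 + B_2 v^2 + O(v^3)$. Collecting terms to order $v^2$ and factoring out the leading powers of $u$, one writes
\[
f(x+y) = u \cdot \frac{1 + A_1 v - u^{-2} A(u)\, v^2 + O(v^3)}{1 - u^{-1} B(u)\, v + B_2 v^2 + O(v^3)},
\]
and expanding the reciprocal of the denominator as a geometric series yields $f(x+y)$ as a power series in $v$, hence in $y$, with coefficients expressed through $A(u) = A(f(x))$ and $B(u) = B(f(x))$.

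The left-hand side is simply the Taylor expansion $f(x+y) = f(x) + f'(x)\, y + \tfrac12 f''(x)\, y^2 + O(y^3)$. Matching the coefficient of $y^1$ gives $f'(x) = A_1 f(x) + B(f(x))$, which is exactly the first relation $B(f(x)) = f'(x) - A_1 f(x)$; here the normalization $B_1 = 0$ is precisely what makes the linear-in-$v$ term of the denominator equal $-u^{-1} B(u)$. As a by-product (or independently, by substituting the series for $f$ into this relation and reading off the coefficient of $x$) one obtains $f_1 = A_1/2$, the only extra datum needed at the next order.

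For the second relation I match the coefficient of $y^2$. Using $f_1 = A_1/2$ and $u p = B(u)$ with $p = u^{-1}B(u)$, the matching produces an identity of the shape
\[
2 A(f(x)) = A_1^2 f(x)^2 + 3 A_1 f(x) B(f(x)) + 2 B(f(x))^2 - 2 B_2 f(x)^2 - f(x) f''(x).
\]
The decisive step is to eliminate $B(f(x))$ through the already-established first relation $B(f(x)) = f'(x) - A_1 f(x)$: after this substitution the terms proportional to $A_1^2 f(x)^2$ cancel identically, the $f(x) f'(x)$ contributions combine to $-A_1 f(x) f'(x)$, and one is left precisely with $2 A(f(x)) = 2 f'(x)^2 - f(x) f''(x) - A_1 f(x) f'(x) - 2 B_2 f(x)^2$.

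The main obstacle is bookkeeping rather than conceptual: one must carry the expansion of the rational function in \eqref{Buch} correctly to order $v^2$ (including the $B_2$ and $A_2 = 0$ contributions and the cross terms coming from the denominator's reciprocal) and track the change of variable $v = f(y)$ to order $y^2$. The clean cancellation of the $A_1^2$ terms upon inserting the first relation is the reassuring consistency check that the two orders have been handled correctly.
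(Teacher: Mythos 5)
Your proof is correct and follows essentially the same route as the paper: the paper likewise expands the addition law $f(x+y)=F(f(x),f(y))$ to second order in the second variable (phrased there as computing $\partial F/\partial v$ and $\partial^2F/\partial v^2$ at $v=0$ both from the explicit Buchstaber form and via the chain rule with the logarithm $g$, which yields $g''(0)=-A_1$, i.e.\ your $f_1=A_1/2$), and then compares the two expansions to obtain both relations. The only difference is bookkeeping: you organize the comparison as a Taylor expansion in $y$ with a geometric-series inversion of the denominator, while the paper works with partial derivatives in $v$; the content is identical.
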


\begin{proof}
From \eqref{Buch} we have
\begin{align*}
\left. {\partial F(u,v) \over \partial v} \right|_{v = 0} &= B(u) + A_1 u,\\
{u \over 2} \left. {\partial^2 F(u,v) \over \partial v^2} \right|_{v = 0} &= B(u)^2 - A(u) + A_1 u B(u) - B_2 u^2.
\end{align*}

On the other hand, from \eqref{f} we get
\begin{align*}
\left. {\partial F(u,v) \over \partial v} \right|_{v = 0} &=  f'(g(u)), \\
\left. {\partial^2 F(u,v) \over \partial v^2} \right|_{v = 0} &= f''(g(u)) + f'(g(u)) g''(0).
\end{align*}

Comparing these expressions, we obtain $g''(0) = - A_1$, $g'''(0) = 2 A_1^2 - 2 B_2$ and expressions \eqref{AB}.

Note that analogous arguments for the next derivative give the differential equation \eqref{feq}
with $C_1 = - 3 A_1$, $C_2 = A_1^2 - 4 B_2$, $C_3 = 6 (A_3 - B_3)$.
\end{proof}

\section{Isogenies of elliptic curves}

Two elliptic curves with Jacobians $\mathcal{E}_1$ and $\mathcal{E}_2$ are isogenic, 
if there exists an epimorphism $\mathcal{E}_1 \to \mathcal{E}_2$.
For a sublattice $\langle 2 \widetilde{\omega}_1, 2 \widetilde{\omega}_2 \rangle$ of $\langle 2 \omega_1, 2 \omega_2 \rangle$
we obtain the isogenie
$\mathbb{C}/\langle 2 \widetilde{\omega}_1, 2 \widetilde{\omega}_2 \rangle \to \mathbb{C}/\langle 2 \omega_1, 2 \omega_2 \rangle$.
The classical approach to elliptic curves isogenies see, for example, in \cite{Iso}.

In this work we will use the approach below to the description of the connection of invariants $g_2, g_3$
of isogenic curves and functions on them.
We use a construction from \cite{WW}, 20.51. According to this construction any elliptic function $f(x)$
can be presented as a rational function in $\wp(x)$ and $\wp'(x)$ with the same periods.

Let $f(x)$ be an elliptic function with periods $2 \omega_1, 2 \omega_2$.
The parallelogram with vertices $0$, $2 \omega_1$, $2 \omega_1 + 2 \omega_2$, $2 \omega_2$ we call the \emph{periods parallelogram}
with generators $(2 \omega_1, 2 \omega_2)$.

Consider the Weierstrass function $\wp(x; g_2, g_3)$ with lattice of periods $\langle 2 \omega_1, 2 \omega_2 \rangle$.
On the periods parallelogram with generators $(2 \omega_1, 2 \omega_2)$ it has order $2$ and a double pole at the origin.
Consider the same function as an elliptic function with periods $(2 \widetilde{\omega}_1, 2 \widetilde{\omega}_2)$, 
where $\langle 2 \widetilde{\omega}_1, 2 \widetilde{\omega}_2 \rangle$ is a sublattice of index $2$ of the lattice $\langle 2 \omega_1, 2 \omega_2 \rangle$.
On the periods parallelogram with generators $(2 \widetilde{\omega}_1, 2 \widetilde{\omega}_2)$ the function $\wp(x; g_2, g_3)$ has order $4$,
double poles at the origin and some other point. So, it can be expressed as
\[
 \wp(x; g_2, g_3) = {(\wp(x; \widetilde{g}_2, \widetilde{g}_3) - a_1) (\wp(x; \widetilde{g}_2, \widetilde{g}_3) - a_2) \over
 \wp(x; \widetilde{g}_2, \widetilde{g}_3) - b_1}
\]
for some $a_1, a_2, b_1$ such that $b_1 \ne a_1$, $b_1 \ne a_2$,
where $\wp(x; \widetilde{g}_2, \widetilde{g}_3)$ is the Weierstrass function
with lattice of periods $\langle 2 \widetilde{\omega}_1, 2 \widetilde{\omega}_2 \rangle$.

From the series expansion of this equation in $x$ at the origin, we obtain the relations on the parameters
\begin{align}
 b_1 &= a_1 + a_2, & &  \nonumber \\
 g_2 &= 4 (a_1 + 3 a_2) (3 a_1 + a_2), &  \widetilde{g}_2 &= g_2 - 20 a_1 a_2, \nonumber \\
 g_3 &= - 8 (a_1 + a_2) (a_1 - a_2)^2, &  \widetilde{g}_3 &= g_3 - 28 a_1 a_2 b_1, \label{iso} \\
 \Delta &= 256 a_1 a_2 (9 b_1^2 - 4 a_1 a_2)^2, & \widetilde{\Delta} &= 16 a_1^2 a_2^2 (9 b_1^2 - 4 a_1 a_2). \nonumber
\end{align}

Thus, we have expressed the invariants $g_2, g_3$ of the lattice $\langle 2 \omega_1, 2 \omega_2 \rangle$
and the invariants $\widetilde{g}_2, \widetilde{g}_3$ of the sublattice
$\langle 2 \widetilde{\omega}_1, 2 \widetilde{\omega}_2\rangle$ of index $2$ of the lattice $\langle 2 \omega_1, 2 \omega_2 \rangle$
in terms of parameters $a_1$, $a_2$.
In the following paragraphs, we will use similar considerations expressing 
one and the same function in terms of Weierstrass functions with different invariants.

Note that this construction as a way to express the invatiants
$\widetilde{g}_2, \widetilde{g}_3$ in terms of invariants $g_2, g_3$
is ambiguous. Namely, for given $g_2, g_3$ formulas \eqref{iso} determine the parameters $a_1, a_2$ ambiguously.
On the other hand, for a given lattice $\langle 2 \omega_1, 2 \omega_2 \rangle$ there are several sublattices
$\langle 2 \widetilde{\omega}_1, 2 \widetilde{\omega}_2\rangle$ of index $2$, namely, the lattices
$\langle 2 \omega_1, 4 \omega_2 \rangle$, $\langle 4 \omega_1, 2 \omega_2 \rangle$, $\langle 4 \omega_1, 2 \omega_1 + 2 \omega_2 \rangle$,
for which the invariants $\widetilde{g}_2, \widetilde{g}_3$ are different, but related with $g_2, g_3$ by formulas \eqref{iso}.

\section{Elliptic function of level 2}

The elliptic function of level $2$ is the Jacobi elliptic sine.
It can be determined as a solution of equation
\begin{equation} \label{Jac}
 f'(x)^2 = 1 - 2 \delta f(x)^2 + \varepsilon f(x)^4 
\end{equation}
with initial conditions $f(0)=0$, $f'(0) = 1$.

\begin{lem}
The Jacobi elliptic sine 
coincides with the two-parametric subfamily of functions $f_{Kr}(x)$, where the parameters from Corollary \ref{Kr}
are related to the parameters of the equation \eqref{Jac} by the relations
\begin{equation} \label{bl}
\alpha = 0, \quad \beta = - {2 \over 3} \delta, \quad \gamma = 0, \quad \lambda  = {16 \over 3} \delta^2 - 4 \varepsilon. 
\end{equation}
\end{lem}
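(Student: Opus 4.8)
The plan is to identify both the Jacobi elliptic sine and the relevant subfamily of $f_{Kr}(x)$ as solutions of one and the same instance of the differential equation \eqref{feq}, and then to pin down the remaining parameter by a single Taylor coefficient, invoking Lemma \ref{lemlem} to conclude. First I would extract from the defining relation \eqref{Jac} an equation of the form \eqref{feq}. Differentiating \eqref{Jac} and cancelling the common factor $2 f'(x)$ gives the second-order equation $f''(x) = - 2 \delta f(x) + 2 \varepsilon f(x)^3$, and one further differentiation yields $f'''(x) = - 2 \delta f'(x) + 6 \varepsilon f(x)^2 f'(x)$. Substituting these into the combination $f f''' - 3 f' f''$, the quartic terms in $f$ cancel and one is left with $f(x) f'''(x) - 3 f'(x) f''(x) = 4 \delta f(x) f'(x)$. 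Thus the Jacobi sine satisfies \eqref{feq} with $C_1 = 0$, $C_2 = 4 \delta$, $C_3 = 0$.

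Comparing these constants with the expressions $C_1 = - 6 \alpha$, $C_2 = 6 \alpha^2 - 6 \beta$, $C_3 = 2 \gamma + 6 \alpha \beta - 2 \alpha^3$ of Corollary \ref{corcor}, I would read off $\alpha = 0$ from $C_1 = 0$, then $\beta = - \tfrac{2}{3} \delta$ from $C_2 = 4 \delta$, and finally $\gamma = 0$ from $C_3 = 0$. This already establishes three of the four relations in \eqref{bl} and shows that the two functions satisfy the very same equation \eqref{feq}.

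The constants $C_1, C_2, C_3$ do not involve $\lambda$, so this comparison cannot determine $\lambda$; the missing relation must come from a higher Taylor coefficient, and this is exactly where Lemma \ref{lemlem} enters. The equation \eqref{feq} leaves the coefficient of $x^5$ free, since the factor $(k-4)$ in the recursion vanishes at $k = 4$, so two solutions with the prescribed initial conditions agree precisely when their expansions agree up to $x^5$. I would therefore compute the $x^5$ coefficient of the Jacobi sine directly from $f''(x) = - 2 \delta f(x) + 2 \varepsilon f(x)^3$, obtaining $f^{(5)}(0)/5! = (\delta^2 + 3 \varepsilon)/30$, and equate it to the corresponding coefficient $(45 \beta^2 - 3 \lambda)/120$ read off from \eqref{mith} with $\alpha = \gamma = 0$. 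Substituting $\beta = - \tfrac{2}{3} \delta$ and solving the resulting linear equation for $\lambda$ gives $\lambda = \tfrac{16}{3} \delta^2 - 4 \varepsilon$, the last relation in \eqref{bl}.

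With the four values $\alpha = 0$, $\beta = - \tfrac{2}{3} \delta$, $\gamma = 0$, $\lambda = \tfrac{16}{3} \delta^2 - 4 \varepsilon$ in hand, both functions satisfy \eqref{feq} with identical $C_1, C_2, C_3$, share the initial conditions $f(0) = 0$, $f'(0) = 1$, and agree through order $x^5$, so Lemma \ref{lemlem} forces them to coincide. I expect the only genuine subtlety to be the bookkeeping that $\lambda$ is invisible to the third-order equation and must be recovered from the $x^5$ term; the remaining work is a short differentiation and a coefficient comparison.
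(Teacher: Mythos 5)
Your proposal is correct and follows essentially the same route as the paper: both differentiate \eqref{Jac} to show the Jacobi sine satisfies $f(x)f'''(x) - 3f'(x)f''(x) = 4\delta f(x)f'(x)$, match this against the constants of Corollary \ref{corcor} and the series \eqref{mith} up to $x^5$, and conclude via Lemma \ref{lemlem}. The only difference is presentational—you derive the relations \eqref{bl} from the matching conditions, while the paper assumes them and verifies the match—and your observation that the lower coefficients are forced by the equation (so only the $x^5$ term, which carries $\lambda$, needs separate comparison) is a correct reading of the recursion in Lemma \ref{lemlem}.
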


\begin{proof}
According to Corollary \ref{corcor} and Lemma \ref{lemlem}, it is sufficient to prove, that the function $f_2(x)$,
determined as a solution of \eqref{Jac} with initial conditions $f(0)=0$, $f'(0) = 1$,
for parameters \eqref{bl} satisfies the differential equation \eqref{feq},
and initial terms of it's series expansion in $x$ up to $x^5$ coincide with \eqref{mith}.

From \eqref{Jac} we get
\[
 f(x) = x - {1 \over 3} \delta x^3 + {1 \over 30} (\delta^2 + 3 \varepsilon) x^5 + O(x^6).
\]
Taking into account relations \eqref{bl}, this coincides with \eqref{mith}.

For the given parameters equation \eqref{feq} takes the form
\begin{equation} \label{mir}
f(x) f'''(x) - 3 f'(x) f''(x) = 4 \delta f(x) f'(x). 
\end{equation}
By differentiating \eqref{Jac}, we obtain
$f''(x) = - 2 \delta f(x) + 2 \varepsilon f(x)^3$,
$f'''(x) = - 2 (\delta - 3 \varepsilon f(x)^2) f'(x)$.
Now substitution into \eqref{mir} gives identity.
\end{proof}

We further denote the elliptic function of level $2$ by $f_2(x)$.
It is non-degenerate for $\Delta = 64 \varepsilon^2 (\delta^2 - \varepsilon) \ne 0$.
For a non-degenerate elliptic function of level $2$ from \eqref{fKr} we have
\[
f_2(x) =  { \sigma(x) \sigma(\omega_1) \over \sigma(\omega_1 - x)} \exp( - \zeta(\omega_1) x).
\]

The elliptic function of level $2$ is a solution of the $N$-th special Hirzebruch functional equation \eqref{fe1} for even $N$.

It is the exponential of the universal formal group of the form
\[
 F(u,v)=\frac{u^2 -v^2}{u B(v)-v B(u)}, \quad B(0) = 1.
\]

\begin{lem} \label{lem52}
The non-degenerate elliptic function of level $n$ is odd if and only if $n = 2$.
\end{lem}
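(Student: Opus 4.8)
The plan is to work from the Weierstrass $\sigma$-representation \eqref{fKr} and to reduce oddness to a symmetry condition on the poles of $f_n$. Writing $z = 2\tfrac{k}{n}\omega_1$ and $c = \alpha - \zeta(z) = -2\tfrac{k}{n}\zeta(\omega_1)$, formula \eqref{fKr} gives $f_n(x) = \sigma(x)\sigma(z)e^{cx}/\sigma(z-x)$. Since $\sigma$ is odd, one finds $f_n(-x) = -\sigma(x)\sigma(z)e^{-cx}/\sigma(z+x)$, so the condition $f_n(-x) = -f_n(x)$ is equivalent to the functional identity $\sigma(z-x)/\sigma(z+x) = e^{2cx}$ holding for all $x$.

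For the ``only if'' direction I would not solve this identity directly but instead argue via the pole set. By the quasiperiodicity $f_n(x+2\omega_1)=f_n(x)$ and $f_n(x+2\omega_2)=\epsilon_n f_n(x)$, the poles of $f_n$ on $\mathbb{C}$ are exactly the points of $z + \Lambda$, where $\Lambda = \langle 2\omega_1, 2\omega_2\rangle$. If $f_n$ is odd, this set must be invariant under $x \mapsto -x$, which forces $-z \in z + \Lambda$, i.e. $2z \in \Lambda$. But $2z = \tfrac{2k}{n}(2\omega_1)$, and since $\omega_2/\omega_1 \notin \mathbb{R}$ this lies in $\Lambda$ only when $\tfrac{2k}{n}\in\mathbb{Z}$; together with $\text{GCD}(k,n)=1$ and $n\geqslant 2$ this gives $n = 2$.

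For the ``if'' direction it remains to check that $f_2$ is indeed odd. Here $z = \omega_1$ is a half-period and $c = -\zeta(\omega_1) = -\eta_1$, and I would verify the identity above using \eqref{sz}: substituting $x \mapsto x - \omega_1$ in $\sigma(x+2\omega_1) = -e^{2\eta_1(x+\omega_1)}\sigma(x)$ and using that $\sigma$ is odd yields $\sigma(\omega_1+x) = e^{2\eta_1 x}\sigma(\omega_1-x)$, whence $\sigma(\omega_1-x)/\sigma(\omega_1+x) = e^{-2\eta_1 x} = e^{2cx}$, as required. Alternatively one may simply invoke that $f_2$ is the Jacobi elliptic sine, which solves \eqref{Jac} and is manifestly odd. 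The only delicate step is the $\sigma$-function bookkeeping in the ``if'' direction; the ``only if'' direction is entirely lattice-theoretic and presents no real obstacle.
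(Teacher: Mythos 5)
Your proof is correct and takes essentially the same approach as the paper: the paper likewise rules out $n>2$ by noting that the pole set of $f_n(x)$, read off from \eqref{fKr}, is not symmetric about the origin, and handles $n=2$ by the oddness of the Jacobi elliptic sine (cited there via equation \eqref{fe1} for $N=2$). Your version simply makes both halves explicit --- the lattice computation showing $2z\in\langle 2\omega_1,2\omega_2\rangle$ forces $n\mid 2$, and the $\sigma$-function identity verifying oddness of $f_2$.
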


\begin{proof}
The elliptic function of level $2$, that is the Jacobi elliptic sine, is odd. This follows, for example, from equation \eqref{fe1} for $N=2$.
For $n > 2$ from the explicit form of \eqref{fKr} it follows that the set of poles of $f_n(x)$ is not symmetric relative to the origin,
therefore it can not be odd.
\end{proof}

The $4$-th special Hirzebruch functional equation \eqref{fe1} has the form
\begin{multline} \label{Hir3}
{ 1 \over f(x_2 - x_1) f(x_3 - x_1) f(x_4 - x_1)} + { 1 \over f(x_1 - x_2) f(x_3 - x_2) f(x_4 - x_2)} + \\
+ { 1 \over f(x_1 - x_3) f(x_2 - x_3) f(x_4 - x_3)} + { 1 \over f(x_1 - x_4) f(x_2 - x_4) f(x_3 - x_4)} = 0.
\end{multline}

In \cite{BN} it was shown that any solution of the $4$-th special Hirzebruch functional equation
is either an elliptic function of level $2$, or an elliptic function of level $4$.

\begin{cor} \label{cor22}
 Any odd solution of the $4$-th special Hirzebruch functional equation with initial conditions $f(0) = 0, f'(0) = 1$
 is an elliptic function of level $2$.
\end{cor}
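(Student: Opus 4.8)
The plan is to combine the classification of \cite{BN} with the series expansion \eqref{mith}. By \cite{BN}, every solution $f$ of the $4$-th special Hirzebruch functional equation \eqref{fe1} with $f(0) = 0$, $f'(0) = 1$ is an elliptic function of level $2$ or of level $4$; in either case it has the form \eqref{fKr}, so $f = f_{Kr}(x)$ for some $(\alpha, \beta, \gamma, \lambda)$, possibly with $\Delta = 0$. It therefore suffices to determine which parameter values produce an odd function and to verify that they all belong to the level-$2$ family \eqref{bl}.

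First I would impose oddness directly on the expansion of Corollary \ref{Kr}. An odd power series has vanishing coefficients at every even power of $x$. In \eqref{mith} the coefficient of $x^2$ is $\alpha$, which forces $\alpha = 0$; substituting this back, the coefficient of $x^4$ becomes $-\gamma/6$, which forces $\gamma = 0$. Hence oddness confines the parameters to the plane $\alpha = \gamma = 0$. Next I would identify this plane with the level-$2$ family: comparing $(\alpha, \beta, \gamma, \lambda) = (0, \beta, 0, \lambda)$ with \eqref{bl}, the relations $\beta = -\tfrac{2}{3}\delta$ and $\lambda = \tfrac{16}{3}\delta^2 - 4\varepsilon$ are solvable for every $(\beta, \lambda)$, namely by $\delta = -\tfrac{3}{2}\beta$ and $\varepsilon = 3\beta^2 - \tfrac{1}{4}\lambda$. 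Thus the whole plane $\alpha = \gamma = 0$ is exactly the family \eqref{bl}, so every odd $f_{Kr}$ coincides with the function $f_2$ determined by \eqref{Jac}, that is, with an elliptic function of level $2$ (degenerate when $\Delta = 0$). This would prove the corollary.

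I expect the main obstacle to be the degenerate case $\Delta = 0$. One is tempted to argue more quickly via Lemma \ref{lem52}, ruling out level $4$ because its pole set is asymmetric about the origin; but that argument is valid only in the non-degenerate setting, since in the limit poles may collide or escape to infinity, and it says nothing about whether a degenerate level-$4$ function could accidentally be odd. The parameter computation above avoids this gap, because it treats the relations \eqref{bl} purely as identities in $\mathbb{C}^4$ and never uses $\Delta \ne 0$. To make the identification rigorous as an equality of functions rather than of parameters, I would note that $f_{Kr}$ with $\alpha = \gamma = 0$ satisfies \eqref{feq} with $C_1 = 0$, $C_2 = -6\beta = 4\delta$, $C_3 = 0$ by Corollary \ref{corcor}, which is precisely the equation satisfied by $f_2$; since the matched parameters make the expansions \eqref{mith} agree through order $x^5$, Lemma \ref{lemlem} then forces the two functions to coincide, uniformly in the degenerate and non-degenerate cases.
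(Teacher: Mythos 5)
Your proof is correct, and it takes a genuinely different route from the paper. The paper treats this statement as an immediate corollary of the classification in \cite{BN} together with Lemma \ref{lem52}: any solution is of level $2$ or level $4$, and by Lemma \ref{lem52} a level-$4$ function cannot be odd, because the pole set of \eqref{fKr} is not symmetric about the origin. You replace Lemma \ref{lem52} by a computation in the parameter space of $f_{Kr}$: oddness kills the $x^2$ and $x^4$ coefficients of \eqref{mith}, forcing $\alpha = \gamma = 0$; the plane $\alpha = \gamma = 0$ is exactly the level-$2$ locus, since $\delta = -\tfrac{3}{2}\beta$, $\varepsilon = 3\beta^2 - \tfrac{1}{4}\lambda$ inverts \eqref{bl}; and Corollary \ref{corcor} plus Lemma \ref{lemlem} upgrade the matching of parameters to an identity of functions. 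The payoff is precisely the point you flag: Lemma \ref{lem52} is stated and proved only for non-degenerate functions, so the paper's shorter argument says nothing about odd \emph{degenerate} level-$4$ functions, and such functions do exist --- setting $\alpha = 0$ in \eqref{ab} gives parameters $(0,\beta,0,12\beta^2)$, for which $\Delta = 0$ and the function is of sine type, hence odd (consistent with Lemma \ref{talem1}, which excludes $\alpha = 0$ only in the non-degenerate case). Your computation shows that these functions also lie in the level-$2$ family (with $\varepsilon = 0$ there), so the corollary survives; the paper's route is shorter and more geometric, while yours is uniform in $\Delta$ and closes the degenerate case using only the series calculus the paper has already set up.
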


As an application of Corollary \ref{cor22} we give several solutions of the problem
of expressing the elliptic function of level $2$ in terms of Weierstrass elliptic functions.

\begin{ex} On the periods parallelogram with generators $(2 \widetilde{\omega}_1, 2 \widetilde{\omega}_2) = (2 \omega_1, 4 \omega_2)$
the function $f_2(x)$ is an odd elliptic function of order $2$.
It has simple zeros at $0$, $2 \omega_2$ and simple poles at $\omega_1$, $\omega_1 + 2 \omega_2$. Therefore, the equality holds
\[
 f_2(x) = {- 2 (\wp(x; \widetilde{g}_2, \widetilde{g}_3) - c) \over \wp'(x; \widetilde{g}_2, \widetilde{g}_3)},
\]
where $\widetilde{g}_2, \widetilde{g}_3$ are invariants,
corresponding to the lattice $\langle 2 \widetilde{\omega}_1, 2 \widetilde{\omega}_2 \rangle$,
and $c = \wp(2 \omega_2; \widetilde{g}_2, \widetilde{g}_3)$.

The initial terms of the series expansion of this equality allows to find the relations between parameters included into it, namely, if
$g_2, g_3$ are invariants, corresponding to the lattice $\langle 2 \omega_1, 2 \omega_2 \rangle$, and $\beta = \wp(\omega_1; g_2, g_3)$, then
\begin{align*}
4 \beta^3 &- g_2 \beta - g_3 = 0, &  \widetilde{g}_2 &= - {1 \over 4} g_2 + 15 c^2, \\
 c &= - {1 \over 2} \beta, & \widetilde{g}_3 & = 4 c^3 - c \widetilde{g}_2.
\end{align*}
Likewise, for the same $\widetilde{g}_2, \widetilde{g}_3$ we have
\[
 f_2(x) = {\wp'(x) \over - 2 (\wp(x) - a) (\wp(x) - b)}, 
\]
where $a = \wp(\omega_1; \widetilde{g}_2, \widetilde{g}_3)$, $b = \wp(\omega_1 + 2 \omega_2; \widetilde{g}_2, \widetilde{g}_3)$.
We obtain the relations
\[
 c = - a - b, \quad \widetilde{g}_2 = 4 (a^2+ a b + b^2), \quad \widetilde{g}_2 = 4 a b c.
\]
\end{ex}

\begin{ex} On the periods parallelogram with generators $(2 \widetilde{\omega}_1, 2 \widetilde{\omega}_2) = (4 \omega_1, 4 \omega_2)$
the function $f_2(x)$ is an odd elliptic function of order $4$.
It has simple zeros at $0$, $2 \omega_1$, $2 \omega_2$, $2 \omega_1 + 2 \omega_2$
and simple poles at $\omega_1$, $3 \omega_1$, $\omega_1 + 2 \omega_2$, $3 \omega_1 + 2 \omega_2$.
Hence, it can be expressed as
\begin{align*}
 &f_2(x) = {\wp'(x) \over - 2 (\wp(x) - a) (\wp(x) - b)}, &
 &\text{where} & &\widetilde{g}_2 = 2 (a^2 + 4 a b + b^2), \\
 & & & & &\widetilde{g}_3 = - {1 \over 2} (a+b) (a^2 + 6 a b + b^2).
\end{align*}
\end{ex}

\begin{ex} \label{rem45} On the periods parallelogram with generators $(2 \widetilde{\omega}_1, 2 \widetilde{\omega}_2) = (2 \omega_1, 8 \omega_2)$
the function $f_2(x)$ is an odd elliptic function of order $4$.
It has simple zeros at $0$, $2 \omega_2$, $4 \omega_2$, $6 \omega_2$
and simple poles at $\omega_1$, $\omega_1 + 2 \omega_2$, $\omega_1 + 4 \omega_2$, $\omega_1 + 6 \omega_2$.
Hence, it can be expressed as
\begin{align*}
 &f_2(x) = { - 2 (\wp(x) - a)  (\wp(x) - b) \over \wp'(x)  (\wp(x) - c)}, & &\text{where} & & c = 2 b - a,\\
 & & & & &\widetilde{g}_2 = - 4 (a^2 - 2 a b - 2 b^2), \\
 & & & & &\widetilde{g}_3 =  4 b (a^2 - 2 a b - b^2).
\end{align*}
Here the relations on the parameters are obtained from initial terms of the series expansion of equation \eqref{Hir3} on this function.

This expression is important for us in terms of isogenies of curves.
The initial terms of the series expansion of this equation allow us to find the relations
\[
\wp(\omega_1, g_2, g_3) = 2 (b - 2 a), \quad g_2 = 4 (44 a^2 - 28 a b - 13 b^2), \quad g_3 = 8 (2 a - b) (28 a^2 - 12 a b - 17 b^2).
\]
Herewith
\[
 \Delta = 1024 (2 a+b) (a-b) (2 a - 5 b)^4, \qquad \widetilde{\Delta} = - 16 (2 a + b) (2 a - 5 b) (a-b)^4.
\]
\end{ex}

\section{Elliptic function of level 3}

The non-degenerate elliptic function of level $3$ has the form \eqref{fKr} 
for $z = {2 \over 3} k \omega_1, \quad k = 1,2$. After replacement of generators of the periods lattice
$(2 \omega_1, 2 \omega_2) \mapsto (- 2 \omega_1, 2 \omega_2)$ the points ${2 \over 3} \omega_1$ and ${4 \over 3} \omega_1$
are interchanged modulo the lattice,
so we can assume that the generators are chosen so that $k = 1$,  $z = {2 \over 3} \omega_1$.

We call the \emph{elliptic function of level $3$} the two-parametric subfamily of functions $f_{Kr}(x)$,
where the parameters (see Corollary \ref{Kr}) are related by
\begin{equation} \label{ag}
 \beta = 3 \alpha^2, \qquad \lambda = 12 \alpha (9 \alpha^3 + \gamma).
\end{equation}
The reason for this definition will be the results of Lemmas \ref{talem}, \ref{salem}.
Further we use the notation $f_3(x)$ for the elliptic function of level $3$. 

The non-degenerate elliptic function of level $3$ is a solution of the $N$-th special Hirzebruch functional equation \eqref{fe1} 
for $N$ divisible by $3$.
For $N=3$ we get the equation
\begin{equation} \label{Hir2}
{ 1 \over f(x_2 - x_1) f(x_3 - x_1)} + { 1 \over f(x_1 - x_2) f(x_3 - x_2)} + { 1 \over f(x_1 - x_3) f(x_2 - x_3)} = 0. 
\end{equation}

\begin{lem} \label{talem}
The parameters of a function of the form $f_{Kr}(x)$ (see Corollary \ref{Kr}), satisfying the equation \eqref{Hir2}, are related by \eqref{ag}.
\end{lem}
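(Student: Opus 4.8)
The plan is to reduce the three-variable functional equation \eqref{Hir2} to a one-variable relation, exploiting that its left-hand side depends on $x_1,x_2,x_3$ only through differences and is therefore invariant under $x_i \mapsto x_i + t$. Using this invariance I would set $x_1 = 0$ and put $x_2 = \epsilon$, $x_3 = t$, so that \eqref{Hir2} becomes
\[
\frac{1}{f(\epsilon) f(t)} + \frac{1}{f(-\epsilon) f(t - \epsilon)} + \frac{1}{f(-t) f(\epsilon - t)} = 0,
\]
an identity that must hold for all small $\epsilon$ and $t$. The idea is to expand the left-hand side as a Laurent series in $\epsilon$ and to extract conditions from the vanishing of each coefficient in $\epsilon$.

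First I would dispose of the pole in $\epsilon$. Since $f(\epsilon) = \epsilon + O(\epsilon^2)$ and $f(-\epsilon) = -\epsilon + O(\epsilon^2)$, the first two terms each contribute a simple pole $\pm \tfrac{1}{\epsilon f(t)}$ which cancel, while the third term is regular at $\epsilon = 0$; thus the $\epsilon^{-1}$ coefficient vanishes automatically and gives no condition. The first genuine constraint sits in the $\epsilon^0$ coefficient. Computing it from $\tfrac{1}{f(\pm\epsilon)} = \pm\tfrac1\epsilon - \alpha + O(\epsilon)$ together with $\frac{1}{f(t-\epsilon)} = \frac{1}{f(t)} + \epsilon\frac{f'(t)}{f(t)^2} + O(\epsilon^2)$, I expect the $\epsilon^0$ coefficient to collapse to the single-variable relation
\[
\frac{f(t)^2}{f(-t)^2} = f'(t) + 2\alpha f(t),
\]
which must hold identically in $t$. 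Only necessity is required for the Lemma, so it suffices that \eqref{Hir2} forces this relation; the higher $\epsilon$-coefficients add nothing, since the genuine elliptic functions of level $3$ satisfy \eqref{Hir2} and already form a two-parameter family.

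The final step is to substitute the expansion \eqref{mith} into this relation and compare coefficients of powers of $t$. Writing $f(t) = t(1 + \alpha t + b_2 t^2 + b_3 t^3 + b_4 t^4 + \cdots)$, the prefactor $t^2$ cancels in $f(t)^2/f(-t)^2$, so the comparison is organized cleanly by the coefficients $b_k$, which are read off from \eqref{mith}. I expect the coefficient of $t^2$ to yield $3 b_2 = 6\alpha^2$, that is $\beta = 3\alpha^2$ (the coefficient of $t^1$ being a trivial identity and that of $t^3$ merely reproducing the same relation), and the coefficient of $t^4$, after substituting $\beta = 3\alpha^2$, to give a linear condition in $\lambda$ equivalent to $\lambda = 12\alpha(9\alpha^3 + \gamma)$. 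These are precisely the relations \eqref{ag}. By Lemma \ref{lemlem} the whole series of $f_{Kr}$ is determined by its coefficients through order $x^5$, i.e. by $(\alpha,\beta,\gamma,\lambda)$; since \eqref{ag} already pins down exactly the data entering through order $t^4$, no higher order can impose an independent condition.

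The main obstacle is the bookkeeping in the two successive expansions: extracting the $\epsilon^0$ coefficient with the pole cancellation handled correctly, and then pushing the $t$-expansion of $f(t)^2/f(-t)^2$ out to order $t^4$, which is where the coefficient $b_4$ — and hence $\lambda$ — first appears and where the second relation in \eqref{ag} is produced. Everything else in the argument is mechanical.
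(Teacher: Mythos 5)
Your proposal is correct and takes essentially the same approach as the paper, whose entire proof is to substitute the series \eqref{mith} into \eqref{Hir2} and read off the relations \eqref{ag} from the coefficients. Your two-step organization of that substitution — first extracting the $\epsilon^0$ coefficient to get the one-variable identity $f(t)^2/f(-t)^2 = f'(t) + 2\alpha f(t)$, then expanding in $t$ — is sound, and the coefficient comparisons you outline do yield exactly $\beta = 3\alpha^2$ (at $t^2$) and $\lambda = 12\alpha(9\alpha^3 + \gamma)$ (at $t^4$).
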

\textsc{The proof} consists in substituting the series \eqref{mith} into \eqref{Hir2}.

\begin{lem} \label{salem}
 For any $\alpha, \gamma$ such, that $\Delta = - 27 (8 \alpha^3 + \gamma) \gamma^3 \ne 0$, the function $f_3(x)$ with parameters $\alpha, \gamma$
 can be expressed in the form \eqref{fKr} with $z = {2 \over 3} \omega_1$.
\end{lem}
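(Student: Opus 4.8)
The plan is to realize the abstract two‑parameter family \eqref{ag} geometrically: starting from $(\alpha,\gamma)$ I build a lattice on which $z=\frac23\omega_1$ is a genuine $3$‑division point carrying exactly the prescribed data $(\wp(z),\wp'(z),g_2)$, and then match the exponential factor so that the Baker--Akhiezer function \eqref{fKr} coincides with $f_3(x)$.

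First I would set $\beta=3\alpha^2$ and $\lambda=12\alpha(9\alpha^3+\gamma)$ as in \eqref{ag}, and read off from Corollary \ref{Kr} the invariants $g_2=\lambda$ and $g_3=4\beta^3-\lambda\beta-\gamma^2$; a short computation gives $g_3=-(216\alpha^6+36\alpha^3\gamma+\gamma^2)$. The hypothesis $\Delta=g_2^3-27g_3^2=-27(8\alpha^3+\gamma)\gamma^3\neq0$ guarantees that these invariants are realized by a genuine nonsingular lattice $\langle 2\omega_1,2\omega_2\rangle$. By construction $4\beta^3-g_2\beta-g_3=\gamma^2$, so by \eqref{wp} there is a point $z$ with $(\wp(z),\wp'(z))=(\beta,\gamma)$, unique up to the sign of $z$, which I fix by the sign of $\gamma$ using that $\wp'$ is odd.

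Next I would show that $z$ is a nonzero $3$‑division point. Using the duplication formula $\wp(2z)=-2\wp(z)+\frac14(\wp''(z)/\wp'(z))^2$ together with $\wp''=6\wp^2-g_2/2$, the condition $3z\equiv0$, i.e.\ $\wp(2z)=\wp(z)$, reduces to the $3$‑division equation $48X^4-24g_2X^2-48g_3X-g_2^2=0$ for $X=\wp(z)$. Substituting $X=\beta=3\alpha^2$ with the above $g_2,g_3$, the coefficients of every monomial in $\alpha,\gamma$ cancel, so the equation holds identically; hence $z$ is $3$‑torsion. Since base changes of the lattice act on its $3$‑torsion through $SL_2(\mathbb{Z}/3)$, which is transitive on the nonzero vectors of $(\mathbb{Z}/3)^2$, I may change the generators $(2\omega_1,2\omega_2)$ — leaving $\wp,\wp',g_2,g_3$ unchanged — so that $z=\frac23\omega_1$.

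It remains to identify the exponential factor, which I expect to be the delicate point. For $z=\frac23\omega_1$ the function $f_{Kr}(x)=\exp(\alpha_{BA}x)/\Phi(x;z)$ with $\alpha_{BA}=\zeta(z)-\frac23\zeta(\omega_1)$ is, by \eqref{fKr}, the non‑degenerate elliptic function of level $3$; in particular it solves \eqref{Hir2}, so by Lemma \ref{talem} its parameters obey \eqref{ag}. Its $(\beta,\gamma,\lambda)$ agree with ours by construction, and $\beta=3\alpha_{BA}^2$ forces $\alpha_{BA}=\pm\alpha$; the second relation $\lambda=12\alpha_{BA}(9\alpha_{BA}^3+\gamma)$ then excludes the sign $-\alpha$ unless $\alpha\gamma=0$ (and $\gamma\neq0$ since $\Delta\neq0$, while $\alpha=0$ makes the sign irrelevant), so $\alpha_{BA}=\alpha$. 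Thus $f_{Kr}(x)$ for this lattice and $z=\frac23\omega_1$ has parameters exactly $(\alpha,\beta,\gamma,\lambda)$, and by Corollary \ref{Kr} and the uniqueness in Lemma \ref{lemlem} it coincides with $f_3(x)$, as asserted. The main obstacle is precisely this matching of $\alpha$: I avoid evaluating $\zeta(\frac23\omega_1)-\frac23\zeta(\omega_1)$ directly and instead pin down its sign by routing it through Lemma \ref{talem}.
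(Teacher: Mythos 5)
Your proof is correct and follows essentially the same route as the paper's: compute $g_2,g_3$ from \eqref{ag}, realize them by a non-degenerate lattice (possible since $\Delta\ne 0$), verify that $\wp(z)=3\alpha^2$ satisfies the $3$-division quartic $48X^4-24g_2X^2-48g_3X-g_2^2=0$, and move $z$ to ${2\over 3}\omega_1$ by a change of lattice generators. The only difference is one of detail: where the paper closes with the terse remark that the parameters ``correspond uniquely,'' you pin down the point by requiring $\wp'(z)=\gamma$ and then fix the sign ambiguity $\alpha_{BA}=\pm\alpha$ by routing the Baker--Akhiezer function through Lemma \ref{talem} and using $\gamma\ne 0$ --- a careful elaboration of the step the paper leaves implicit, not a different method.
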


\begin{proof}
From Corollary \ref{Kr}, taking into account \eqref{ag}, we get
 \[
  g_2 = 12 \alpha (9 \alpha^3 + \gamma), \quad g_3 = - 216 \alpha^6 - 36 \alpha^3 \gamma - \gamma^2.
 \]
 Let $\langle 2 \omega_1, 2 \omega_2 \rangle$ be the periods lattice of Weierstrass $\wp$-function with invariants $g_2$, $g_3$.
 Since $\Delta \ne 0$, the lattice is non-degenerate. 
 In this conditions, the value $\wp(z) = 3 \alpha^2$ is a solution of the equation
 \[
  48 \wp(z)^4 - 24 g_2 \wp(z)^2 - 48 g_3 \wp(z) - g_2^2 = 0.
 \]
Since $\wp(z)$ is a function of order $2$, on the periods parallelogram with generators $(2 \omega_1, 2 \omega_2)$
this equation has not more than $8$ solutions $z$.
On the other hand, for a given lattice one can take any of the $8$ points of order $3$ as the pole $z$ of \eqref{fKr}
(the expression \eqref{fKr} with $z = {2 \over 3} \omega_1$
will be achieved by replacement of generators of the lattice).
Hence, for every such point and its corresponding expression \eqref{fKr}
the parameters $\alpha, \gamma$ correspond uniquely.
\end{proof}

\begin{thm}[\cite{BBZam, BBHi2}]
The elliptic function of level $3$ is the general solution of the $3$-d special Hirzebruch functional equation \eqref{Hir2} with initial conditions
$f(0) = 0, f'(0) = 1$.
\end{thm}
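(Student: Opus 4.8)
The plan is to prove two things: that every $f_3$ is a solution of \eqref{Hir2}, and conversely that every solution of \eqref{Hir2} normalized by $f(0)=0$, $f'(0)=1$ coincides with some $f_3$. Throughout I treat a solution as a formal power series $f(x) = x + f_1 x^2 + f_2 x^3 + \cdots$, which is legitimate since the initial conditions fix the $0$th and $1$st coefficients.

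For the first statement, suppose first that $\Delta = -27(8\alpha^3+\gamma)\gamma^3 \neq 0$. Then Lemma \ref{salem} realizes $f_3$ as $f_{Kr}$ with pole $z = \frac{2}{3}\omega_1$, a point of order $3$; by the identification \eqref{fKr} this is exactly the non-degenerate elliptic function of level $3$, which solves \eqref{Hir2} by the fact recalled above (the case $N=3$). To remove the restriction $\Delta \neq 0$, I would observe that clearing denominators turns \eqref{Hir2} into a family of polynomial identities in the Taylor coefficients of $f$; for $f = f_3$ these coefficients are, via \eqref{mith} and \eqref{ag}, polynomials in $(\alpha,\gamma)$, so an identity valid on the Zariski-dense set $\{\Delta \neq 0\}$ is valid for all $(\alpha,\gamma)$. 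Hence every $f_3$ solves \eqref{Hir2}.

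For the converse, the crucial reduction is to show that any formal solution $f$ of \eqref{Hir2} also satisfies the third-order equation \eqref{feq}. Granting this, the argument closes quickly. By \eqref{mith} the assignment $(\alpha,\beta,\gamma,\lambda)\mapsto(f_1,f_2,f_3,f_4)$ is a bijection, so I can choose $f_{Kr}$ having the same $5$-jet as $f$. Since $f_{Kr}$ satisfies \eqref{feq} by Corollary \ref{corcor}, and since the constants $C_1, C_2, C_3$ are determined by $(f_1,f_2,f_3)$ alone (as in the proof of Lemma \ref{lemlem}), the functions $f$ and $f_{Kr}$ satisfy the \emph{same} equation \eqref{feq} with the same initial $5$-jet; Lemma \ref{lemlem} then gives $f = f_{Kr}$. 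As $f = f_{Kr}$ is a solution of \eqref{Hir2}, Lemma \ref{talem} forces its parameters to obey \eqref{ag}, that is, $f$ is an $f_3$.

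The derivation of \eqref{feq} from \eqref{Hir2} is the step I expect to be the main obstacle. My approach is to use that \eqref{Hir2} involves only the differences $u = x_2 - x_1$, $v = x_3 - x_2$, and to expand the resulting two-variable identity about $v = 0$. The terms are polar in $v$; after cancelling the $v^{-1}$ part, the $v^0$-coefficient already yields a relation among $f(u)$, $f(-u)$ and $f'(-u)$ (and, at its own lowest order in $u$, the constraint $f_2 = 2 f_1^2$ corresponding to $\beta = 3\alpha^2$ in \eqref{ag}). Differentiating this relation in $u$ and combining it with the expansion in the complementary variable should eliminate the reflected arguments $f(-u)$, $f'(-u)$ and isolate the combination $f(x)f'''(x) - 3 f'(x)f''(x)$, producing \eqref{feq} with $C_1, C_2, C_3$ equal to the low-order Taylor data of $f$ exactly as in Corollary \ref{corcor}. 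Keeping track of the polar terms so that no relation is lost in the limit is the delicate point of this computation.
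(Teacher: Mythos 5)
The first thing to note is that the paper does not prove this statement at all: it is imported verbatim from \cite{BBZam} and \cite{BBHi2}, so there is no internal proof to compare your attempt against, and what you have written is in effect a reconstruction of the cited result. Your outer framework is sound and uses exactly the tools the paper provides. The forward direction (every $f_3(x)$ solves \eqref{Hir2}) is correct: Lemma \ref{salem} identifies $f_3(x)$ with the non-degenerate elliptic function of level $3$ when $\Delta \ne 0$ (note that this requires reading Lemma \ref{salem} as asserting the full identification, i.e.\ that the parameter $\alpha$ takes the value $-\frac{2}{3}\zeta(\omega_1)+\zeta(z)$, not merely that $f_3$ has the shape \eqref{fKr}), and the Zariski-density argument for the degenerate locus is legitimate, although for polynomiality of \emph{all} Taylor coefficients in $(\alpha,\gamma)$ you should invoke $f_{Kr}(x)\in\mathbb{Q}[\alpha,\beta,\gamma,\lambda][[x]]$ rather than \eqref{mith}, which only lists the $5$-jet. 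The converse's reduction scheme is also clean: the jet bijection $(\alpha,\beta,\gamma,\lambda)\leftrightarrow(f_1,f_2,f_3,f_4)$ from \eqref{mith}, the determination of $C_1,C_2,C_3$ by the jet (proof of Lemma \ref{lemlem}), Corollary \ref{corcor}, Lemma \ref{lemlem}, and finally Lemma \ref{talem} do yield $f=f_{Kr}=f_3$, \emph{provided} every solution of \eqref{Hir2} satisfies an equation of the form \eqref{feq}.

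That proviso is the genuine gap. The step you yourself flag as ``the main obstacle'' --- deriving \eqref{feq} from \eqref{Hir2} --- is the entire analytic content of the converse, and it is precisely what \cite{BBZam} and \cite{BBHi2} actually establish; in your write-up it remains a plan (``should eliminate'', ``delicate point''), not a proof. Your sketch starts correctly: with $x_1=0$, $x_2=u$, $x_3=u+v$ the polar parts in $v$ cancel and the $v^0$ coefficient gives
\begin{equation*}
 f(-u)^2 = f(u)^2\bigl(f'(-u) + 2 f_1 f(-u)\bigr),
\end{equation*}
whose lowest order indeed forces $f_2 = 2f_1^2$, i.e.\ $\beta = 3\alpha^2$ in \eqref{ag}. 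But this relation and its $u$-derivatives still involve the reflected unknowns $f(-u)$, $f'(-u)$, $f''(-u)$; to eliminate them you must bring in the $v^1$ and $v^2$ coefficients of the expansion and then verify that the elimination closes into a \emph{third-order} equation of exactly the shape \eqref{feq}, with constants matching the jet data as in Corollary \ref{corcor}. Nothing in your text demonstrates that this elimination terminates, nor that no constraint is lost in the process; asserting that it ``should'' work is not a proof. As it stands, your argument correctly reduces the theorem to this one computational lemma, but since that lemma is exactly the substance of the cited references, the theorem itself remains unproven by your proposal.
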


We will need initial terms of the series expansion of $f_3(x)$:
\begin{equation} \label{ryad3}
f_3(x) = x + \alpha x^2 + 2 \alpha^2 x^3 + {1 \over 6} (10 \alpha^3 - \gamma) x^4 + {1 \over 15} \alpha (22 \alpha^3 - 7 \gamma) x^5 + \ldots 
\end{equation}

\begin{thm}[(\cite{BB3}, see also \cite{BBU})]
The elliptic function of level $3$ is the 
exponential of the universal formal group of the form
\begin{equation} \label{F3!}
 F(u,v)=\frac{u^2 A(v) -v^2 A(u)}{u A(v)^2 - v A(u)^2}, \qquad A(0) = 1, \quad A''(0) = 0.
\end{equation}
\end{thm}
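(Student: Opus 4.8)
The plan is as follows. Since $f_3(x) = f_{Kr}(x)$, by \cite{Buc90} it is the exponential of a Buchstaber formal group \eqref{Buch}; let $A(u), B(u)$, normalized by $A_2 = B_1 = 0$, be the corresponding series. Because the right hand side of \eqref{Buch} is unchanged under $B(u) \mapsto B(u) - B_1 u$, the group \eqref{F3!} is exactly the case $B(u) = A(u)^2$, and proving the theorem reduces to establishing $A''(0) = 0$ together with $B(u) = A(u)^2 - 2 A_1 u$ in the chosen normalization. The first is just $A_2 = 0$, so the whole content is the single identity $A(u)^2 = B(u) + 2 A_1 u$.

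First I would put $A(f_3)$ and $B(f_3)$ into closed form. Comparing Corollary \ref{corcor} with the coefficients of \eqref{feq} produced by the lemma establishing \eqref{AB} gives, for every $f_{Kr}$, the values $A_1 = 2\alpha$ and $B_2 = (3\beta - \alpha^2)/2$. Writing $L = \Phi'/\Phi$, so that $f_3' = f_3(\alpha - L)$ and, by the Lam\'e equation \eqref{Lame} together with \eqref{log}, $L' = 2\wp(x) + \wp(z) - L^2$, the relations \eqref{AB} collapse after a short computation to
\[
A(f_3(x)) = f_3(x)^2\,(\wp(x) - \beta), \qquad B(f_3(x)) = -\,f_3(x)\,(\alpha + L),
\]
with $\beta = \wp(z)$. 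Substituting into $A(u)^2 = B(u) + 2A_1 u$ with $u = f_3(x)$ and cancelling one factor $f_3$, the desired relation becomes the functional identity
\[
f_3(x)^3\,(\wp(x) - \beta)^2 = 3\alpha - L.
\]

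The verification of this identity is the heart of the argument and the step I expect to be the main obstacle. For the parameters \eqref{ag}, Lemma \ref{salem} lets me write $f_3$ as \eqref{fKr} with $z = \tfrac23\omega_1$, a point of order $3$; hence $f_3(x + 2\omega_2) = \epsilon_3 f_3(x)$ with $\epsilon_3^3 = 1$, so $f_3^3$ is elliptic with periods $2\omega_1, 2\omega_2$ and both sides of the identity are elliptic functions whose only possible poles lie at $x = 0$ and $x = z$. I would then match divisors: a direct expansion at the origin shows each side has leading part $x^{-1}$ and the same constant term $3\alpha$, so the principal parts at $x = 0$ agree and the difference has no pole there; the only remaining possible pole is the simple pole at $x = z$, and since an elliptic function cannot have a single simple pole, the residue theorem forces the residues at $z$ to agree as well. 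The difference of the two sides is therefore a holomorphic elliptic function, hence constant, and that constant vanishes because the two sides share the same value at $x = 0$. Carrying out this pole-and-residue bookkeeping carefully — and checking that the two relations \eqref{ag} are precisely what make $z$ a point of order $3$, via $\wp(z) = 3\alpha^2$ and $\wp''(z) = 6\beta^2 - \tfrac12 g_2 = -6\alpha\gamma$, the latter giving $\lambda = 12\alpha(9\alpha^3 + \gamma)$ — is the technical core. This establishes $B(u) = A(u)^2 - 2A_1 u$, i.e. that the formal group of $f_3$ is of the form \eqref{F3!}.

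It remains to prove universality. For this I would argue the converse: the exponential of any formal group of the form \eqref{F3!} is an $f_{Kr}$ whose equation \eqref{feq} has, by the lemma establishing \eqref{AB} specialized to $B = A^2$, the coefficients $C_1 = -3A_1$, $C_2 = -3A_1^2$, $C_3 = -6A_3$; matching these against Corollary \ref{corcor} forces $\beta = 3\alpha^2$, and the higher coefficients of the relation $B(u) = A(u)^2$ pin down $\lambda$, so the parameters satisfy \eqref{ag} and the exponential is an elliptic function of level $3$. Combined with the closed forms above, which express all coefficients $A_k$ as polynomials in the two free parameters $\alpha, \gamma$ with $A_1, A_3$ serving as generators, this identifies the coefficient ring of the universal formal group of the form \eqref{F3!} with the one carrying $f_3$, as in \cite{BU, BBU}. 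The decisive step throughout is the all-orders identity of the previous paragraph; once $A(f_3)$ and $B(f_3)$ are in closed form, everything else is elliptic-function bookkeeping and comparison of series.
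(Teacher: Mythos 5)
Your proposal is correct in substance, but it takes a genuinely different route from the paper's. The paper proves this theorem by citing \cite{BB3, BBU} and sketching their method, which parallels its own level-$4$ argument (Theorems \ref{thmain} and \ref{Tmain}): one writes $f_3(x)$ as an explicit rational function of $\wp(x)$ and $\wp'(x)$ for the index-$3$ sublattice $\langle 2\omega_1, 6\omega_2\rangle$, fixes it by Lemma \ref{lemlem} and the series \eqref{ryad3}, and then verifies the algebraic relation defining the form \eqref{F3!} by direct (in the level-$4$ case computer-assisted) substitution. You instead stay on the original lattice: from \eqref{AB}, the Lam\'e equation \eqref{Lame} and \eqref{log} you derive the closed forms $A(f_3(x)) = f_3(x)^2(\wp(x)-\beta)$ and $B(f_3(x)) = -f_3(x)(\alpha + L)$ with $L = \Phi'/\Phi$ (both correct; I checked them, together with $A_1 = 2\alpha$, $B_2 = (3\beta-\alpha^2)/2$), so that the condition $B(u) = A(u)^2 - 2A_1 u$ becomes the single identity $f_3^3(\wp-\beta)^2 = 3\alpha - L$, which you prove by a Liouville-type argument: the cube-root-of-unity multiplier makes $f_3^3$ elliptic for the full lattice, both sides have only simple poles at $0$ and $z$, the principal parts and constant terms at $0$ agree ($1/x + 3\alpha + O(x)$ on both sides), and the residue theorem removes the possible simple pole at $z$. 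This replaces the paper's symbolic verification with a conceptual divisor argument and explains structurally why level $3$ works (cubes of the multiplier are trivial); what it gives up is the explicit sublattice expression for $f_3$, which the paper values for its isogeny picture.

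Two loose ends should be tightened. First, the Liouville argument needs $\Delta \ne 0$, since it invokes Lemma \ref{salem}; because the theorem is an identity of power series whose coefficients are polynomials in $(\alpha,\gamma)$, you should add that validity on the Zariski-dense set $\Delta \ne 0$ implies validity identically. Second, the universality step is sketchier than it should be: matching $C_1, C_2, C_3$ against Corollary \ref{corcor} determines $\alpha$, $\beta$, $\gamma$ (giving $\beta = 3\alpha^2$) but not $\lambda$, since $\lambda$ does not occur in the $C_i$ and equation \eqref{feq} with fixed $C_i$ has a one-parameter family of solutions (the recursion in Lemma \ref{lemlem} degenerates at $k = 4$, leaving the $x^5$ coefficient free). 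Your remark that the higher coefficients of $B = A^2 - 2A_1 u$ ``pin down $\lambda$'' is true --- the identity $A(f)^2 = B(f) + 2A_1 f$ is linear in $\lambda$ at order $x^3$ with nonzero coefficient, which forces the second relation of \eqref{ag} --- but this is precisely the step that recovers $\lambda = 12\alpha(9\alpha^3+\gamma)$, so it must be stated and verified rather than asserted.
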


Note that the form \eqref{F3!} is a specialization of \eqref{Buch} with $B(u) = A(u)^2 - 2 A_1 u$.

The proof of this Theorem uses an expression of the elliptic function of level $3$ in terms of Weierstrass elliptic functions
(see \cite{BBU, BBZam, BB3}).
In our terms, such a function can be sought as a solution of equation \eqref{feq} that has an expression in Weierstrass $\wp$-functions,
order $3$ on the periods parallelogram with generators $(2 \widetilde{\omega}_1, 2 \widetilde{\omega}_2) = (2 \omega_1, 6 \omega_2)$,
and initial terms of the series expansion at the origin that coincide with the expansion \eqref{ryad3}.
Such a function is
\begin{align*}
 f(x) &= - 6 {\wp(x) + \alpha^2 \over 3 \wp'(x) + 6 \alpha \wp(x) - (2 \alpha^3 + \gamma)}, &
 \quad \text{where} \quad 3 \widetilde{g}_2 &= 4 \alpha (\alpha^3 - \gamma),\\
 & & - 27 \widetilde{g}_3 &= 8 \alpha^6 + 20 \alpha^3 \gamma - \gamma^2.
\end{align*}

\section{Elliptic function of level 4}

The non-degenerate elliptic function of level $4$ has the form \eqref{fKr} 
for $z = {1 \over 2} k \omega_1, \quad k = 1,3$. After replacement of generators of the periods lattice
$(2 \omega_1, 2 \omega_2) \mapsto (- 2 \omega_1, 2 \omega_2)$ the points ${1 \over 2} \omega_1$ and ${3 \over 2} \omega_1$
are interchanged modulo the lattice,
so we can assume that the generators are chosen so that $k = 1$,  $z = {1 \over 2} \omega_1$.

We call the \emph{elliptic function of level $4$} the two-parametric subfamily of functions $f_{Kr}(x)$, where the parameters (see Corollary \ref{Kr})
are related by
\begin{equation} \label{ab}
\gamma = 4 \alpha (4 \alpha^2 - 3 \beta), \quad \lambda = 4 (32 \alpha^4 - 24 \alpha^2 \beta + 3 \beta^2).
\end{equation}
The reason for this definition will be the results of Lemmas \ref{talem1}, \ref{talem2}, \ref{talem3}.
Further we use the notation $f_4(x)$ for the elliptic function of level $4$. 

The non-degenerate elliptic function of level $4$ is a solution of the $N$-th special Hirzebruch functional equation \eqref{fe1}
for $N$ divisible by $4$.
For $N=4$ we get equation~\eqref{Hir3}
\begin{multline*} 
{ 1 \over f(x_2 - x_1) f(x_3 - x_1) f(x_4 - x_1)} + { 1 \over f(x_1 - x_2) f(x_3 - x_2) f(x_4 - x_2)} + \\
+ { 1 \over f(x_1 - x_3) f(x_2 - x_3) f(x_4 - x_3)} + { 1 \over f(x_1 - x_4) f(x_2 - x_4) f(x_3 - x_4)} = 0.
\end{multline*}

\begin{lem} \label{talem1}
 For the non-degenerate elliptic function of level $4$ we have $\alpha \ne 0$.
\end{lem}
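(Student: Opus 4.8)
The plan is to argue by contradiction: I would assume $\alpha = 0$ and show that the function is then forced to be degenerate, so that no \emph{non-degenerate} level $4$ function can have $\alpha = 0$. The elliptic function of level $4$ is by definition the subfamily of $f_{Kr}(x)$ cut out by the relations \eqref{ab}, so the first step is simply to feed $\alpha = 0$ into those relations. This immediately gives $\gamma = 4 \alpha (4 \alpha^2 - 3 \beta) = 0$ and $\lambda = 4 (32 \alpha^4 - 24 \alpha^2 \beta + 3 \beta^2) = 12 \beta^2$, leaving $\beta$ as the only free parameter.

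The second step is to evaluate the discriminant from Corollary \ref{Kr} at these values. Using $g_3 = 4 \beta^3 - \lambda \beta - \gamma^2$ I would first get $g_3 = 4 \beta^3 - 12 \beta^3 = - 8 \beta^3$, and then
\[
 \Delta = \lambda^3 - 27 g_3^2 = (12 \beta^2)^3 - 27 (- 8 \beta^3)^2 = 1728 \beta^6 - 1728 \beta^6 = 0.
\]
Since $\Delta$ vanishes identically in $\beta$, every function in the family with $\alpha = 0$ is degenerate, contradicting the standing non-degeneracy hypothesis $\Delta \ne 0$. Hence $\alpha \ne 0$, as claimed.

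I expect no real obstacle in this argument: after eliminating $\gamma$ and $\lambda$ via \eqref{ab}, the statement collapses to the single arithmetic identity $(12 \beta^2)^3 = 27 (8 \beta^3)^2$. The only points that require a little care are to use the correct formula $g_3 = 4 \beta^3 - \lambda \beta - \gamma^2$ for the third invariant (and not to confuse $\lambda = g_2$ with $g_3$), and to note that the cancellation is exact in $\beta$, so that the conclusion is independent of the remaining free parameter.
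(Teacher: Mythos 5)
Your argument has a genuine circularity problem. Lemma \ref{talem1} is a statement about the non-degenerate elliptic function of level $4$ in the sense of Definition 1.1 --- the quasiperiodic function expressed by \eqref{fKr} with $z = \frac{1}{2}\omega_1$ --- and its role in the paper is to supply the hypothesis $\alpha \ne 0$ needed in Lemma \ref{talem2}, which is what establishes the relations \eqref{ab} for this function in the first place. You instead take \eqref{ab} as your starting point (``by definition the subfamily cut out by \eqref{ab}''), so you are assuming exactly what Lemmas \ref{talem1} and \ref{talem2} together are meant to prove. At the point where Lemma \ref{talem1} is stated, the only relation available for the Definition 1.1 function is the lowest-order consequence of the functional equation \eqref{Hir3}, namely $\gamma = 4\alpha(4\alpha^2 - 3\beta)$; the second relation $\lambda = 4(32\alpha^4 - 24\alpha^2\beta + 3\beta^2)$ is derived in Lemma \ref{talem2} only under the hypothesis $\alpha \ne 0$. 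And this is not a formality: the second relation genuinely fails for $\alpha = 0$ solutions of \eqref{Hir3}. The Jacobi elliptic sine solves \eqref{Hir3}, has $\alpha = \gamma = 0$, $\beta = -\frac{2}{3}\delta$, $\lambda = \frac{16}{3}\delta^2 - 4\varepsilon$ by \eqref{bl}, and is non-degenerate whenever $\Delta = 64\varepsilon^2(\delta^2 - \varepsilon) \ne 0$; yet $\lambda \ne 12\beta^2$ unless $\varepsilon = 0$. So the implication ``$\alpha = 0$ forces degeneracy'' is false for the class of functions from which your argument would need to exclude $\alpha = 0$, and it cannot be rescued by appealing to the functional equation.

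Your arithmetic itself is correct: inside the two-parametric family \eqref{ab}, setting $\alpha = 0$ does give $\gamma = 0$, $\lambda = 12\beta^2$, $g_3 = -8\beta^3$ and hence $\Delta = 0$ --- a fact also visible from the factor $\alpha^2$ in the discriminant $\Delta = 256\,\alpha^2(5\alpha^2 - 3\beta)(4\alpha^2 - 3\beta)^4$ appearing in Lemma \ref{talem3}. But this only proves that non-degenerate members of the family \eqref{ab} have $\alpha \ne 0$, which is a different (and, in the paper's logical order, much weaker) statement than the lemma. The paper's proof avoids the circle entirely: it derives the single relation $\gamma = 4\alpha(4\alpha^2 - 3\beta)$ directly from \eqref{Hir3} (valid with no assumption on $\alpha$), and then uses that for the Definition 1.1 function one has $\beta = \wp(z)$, $\gamma = \wp'(z)$ with $z = \frac{1}{2}\omega_1$: if $\alpha = 0$ then $\wp'(z) = 0$, but on a non-degenerate curve $\wp'$ vanishes only at the half-periods $\omega_1$, $\omega_2$, $\omega_1 + \omega_2$, and $\frac{1}{2}\omega_1$ is not one of them.
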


\begin{proof}
By substituting the series \eqref{mith} into \eqref{Hir3}, for $x_k \to 0$ we get
\[
   \gamma = 4 \alpha (4 \alpha^2 - 3 \beta).
\]
For the non-degenerate elliptic function of level $4$ we have $ \gamma = \wp'(z)$, $\beta = \wp(z)$ in \eqref{fKr}.
Hence, for $\alpha = 0$ we get $\wp'(z) = 0$.
But on the periods parallelogram with generators $(2 \omega_1, 2 \omega_2)$
we have $\wp'(x) = 0$ only in the points $x = \omega_1$, $x = \omega_2$, $x = \omega_1 + \omega_2$,
while $z = {1 \over 2} \omega_1$.
\end{proof}

\begin{lem} \label{talem2}
The parameters of a function of the form $f_{Kr}(x)$ (see Corollary \ref{Kr}), satisfying the equation \eqref{Hir3},
for $\alpha \ne 0$ are related by \eqref{ab}.
\end{lem}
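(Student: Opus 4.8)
The plan is to proceed exactly as in the proof of Lemma \ref{talem} for level $3$: substitute the explicit power series \eqref{mith} of $f_{Kr}(x)$ into the $4$-th special Hirzebruch equation \eqref{Hir3} and read off the relations that the vanishing of the left-hand side forces on $(\alpha,\beta,\gamma,\lambda)$. By Lemma \ref{lemlem} the function $f_{Kr}$ is determined by its $5$-jet, so it suffices to work with \eqref{mith} up to order $x^5$; correspondingly I only need to expand \eqref{Hir3} to the order at which the coefficient $\lambda$ first enters, which is one order beyond that used already in Lemma \ref{talem1}.

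First I would exploit the translation invariance of \eqref{Hir3}: the left-hand side depends only on the differences $x_j - x_i$, so without loss of generality I set $x_1 = 0$ and regard the expression as a meromorphic function of $x_2, x_3, x_4$. Each of the four summands has a third-order pole along the diagonal $x_i = x_j$, coming from the three factors $1/f$; to turn the identity into polynomial conditions I would clear denominators by multiplying through by a common product of the $f(x_j - x_i)$ and then Taylor-expand the resulting regular expression about $x_2 = x_3 = x_4 = 0$. A practical shortcut, which I expect to reproduce the same constraints with far less bookkeeping, is to restrict to the one-parameter path $x_2 = s$, $x_3 = 2s$, $x_4 = 3s$ and expand as a Laurent series in $s$. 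One checks directly that the leading $s^{-3}$ coefficient is a purely numerical sum $\tfrac16 - \tfrac12 + \tfrac12 - \tfrac16 = 0$, so the highest-order poles cancel automatically, as they must for a genuine solution, and the constraints arise from the subsequent coefficients.

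Collecting coefficients order by order, the lowest nontrivial one should reproduce the first relation of \eqref{ab}, namely $\gamma = 4\alpha(4\alpha^2 - 3\beta)$, in agreement with Lemma \ref{talem1}. Substituting this back and passing to the next order, I expect the resulting condition to take the form $\alpha\bigl[\lambda - 4(32\alpha^4 - 24\alpha^2\beta + 3\beta^2)\bigr] = 0$; here the hypothesis $\alpha \ne 0$ is precisely what allows me to divide out the factor $\alpha$ and conclude the second relation of \eqref{ab}. This also explains why the lemma is stated for $\alpha \ne 0$: when $\alpha = 0$ the factor vanishes identically and $\lambda$ is left unconstrained at this order.

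The main obstacle will be the algebraic organization of the expansion rather than any conceptual difficulty: one must carry the multivariable (or one-parameter) Laurent expansion far enough that $\lambda$ actually appears, keep track of the cancellation of the negative-order poles, and verify that the two relations of \eqref{ab} are \emph{exactly} the constraints produced — neither under- nor over-determined — at the orders considered. A secondary point to check is that the substitution $x_k = (k-1)s$, if used in place of the full expansion, is generic enough to detect both relations and does not accidentally annihilate one of them; should that occur, I would fall back on the complete symmetric expansion in $x_2, x_3, x_4$.
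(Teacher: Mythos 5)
Your overall strategy coincides with the paper's (substitute the expansion \eqref{mith} into \eqref{Hir3} and extract relations order by order), but as planned it stops one order too early and cannot produce the second relation of \eqref{ab}. The step ``$f_{Kr}$ is determined by its $5$-jet, hence working with \eqref{mith} up to $x^5$ suffices'' is a non sequitur, and in fact provably false. Every coefficient extracted from \eqref{Hir3} is a polynomial constraint on $(\alpha,\beta,\gamma,\lambda)$ that must vanish identically on \emph{both} known two-parameter families of solutions: the level-$2$ locus $\{\alpha=0,\ \gamma=0\}$ (the Jacobi sine solves \eqref{fe1} for every even $N$, with $\beta,\lambda$ unconstrained) and the level-$4$ locus \eqref{ab}. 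Give the parameters the weights $1,2,3,4$; the constraint produced at the first order where $\lambda$ enters (through the $x^5$-term of $f$) is weighted-homogeneous of weight $4$, and a short check shows that the only weight-$4$ polynomials vanishing on both loci are the scalar multiples of $\alpha\bigl(\gamma-4\alpha(4\alpha^2-3\beta)\bigr)$. So that order is automatically a consequence of the relation already obtained in Lemma \ref{talem1} and can never constrain $\lambda$. The constraint you predict, $\alpha\bigl(\lambda-4(32\alpha^4-24\alpha^2\beta+3\beta^2)\bigr)=0$, does appear, but only at weight $5$, i.e.\ one order further, and that coefficient of the expansion involves the $x^6$-coefficient of $f_{Kr}$, which is not contained in \eqref{mith}. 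This is exactly why the paper's proof states that one needs additionally the coefficient of \eqref{mith} at $x^6$, computed by substituting \eqref{mith} into the differential equation \eqref{feq}; your proof must do the same.

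Your shortcut path $x_k=(k-1)s$ has an independent defect, of precisely the kind you flagged as a risk. The configuration $(0,s,2s,3s)$ is carried into itself by the reflection $x\mapsto 3s-x$; since the left-hand side of \eqref{Hir3} is symmetric in its four arguments and depends only on their differences, writing $L$ for it gives $L(0,-s,-2s,-3s)=L(3s,2s,s,0)=L(0,s,2s,3s)$, so the restriction of \eqref{Hir3} to this path is an \emph{even} function of $s$. Consequently all odd-order coefficients vanish identically for an \emph{arbitrary} $f$: the cancellation $\tfrac16-\tfrac12+\tfrac12-\tfrac16=0$ at $s^{-3}$, which you read as the pole cancellation expected of a genuine solution, is merely this parity, and the same parity kills the $s^{1}$ coefficient --- the very order at which you hoped to capture $\lambda$. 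On this path the $\lambda$-relation first appears at $s^{2}$, whose coefficient again requires the $x^6$-term of $f_{Kr}$. So you must either use a non-symmetric specialization or the full expansion in $x_2,x_3,x_4$, and in every case carry the expansion of $f_{Kr}$ to order $x^6$ via \eqref{feq}.
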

\textsc{The proof} consists in substituting the series \eqref{mith} into \eqref{Hir3}.
Here, we need additionally the coefficient of the series \eqref{mith} at $x^6$, which is calculated by substituting \eqref{mith} into the equation
\eqref{feq}. It is equal to
\[
 {1 \over 5!} (\alpha^5 + 10 \alpha^3 \beta + 45 \alpha \beta^2 - 10 \alpha^2 \gamma - 22 \beta \gamma - 3 \alpha \lambda).
\]

\begin{lem} \label{talem3}
 For any $\alpha, \beta$ such that $\Delta = 256 \alpha^2 (5 \alpha^2 - 3 \beta) (4 \alpha^2 - 3 \beta)^4 \ne 0$,
 the function $f_4(x)$ with parameters $\alpha, \beta$
 can be expressed as \eqref{fKr} with $z = {1 \over 2} \omega_1$.
\end{lem}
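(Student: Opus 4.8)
The plan is to follow the strategy of Lemma~\ref{salem}: from $(\alpha,\beta)$ construct a non-degenerate lattice, locate on it a point $z$ of order $4$ realizing the prescribed values of the Weierstrass functions, and then identify the two descriptions of $f_4(x)$ through Corollary~\ref{corcor} and Lemma~\ref{lemlem}.

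First I would feed the level-$4$ relations \eqref{ab} into Corollary~\ref{Kr} to write the invariants explicitly,
\begin{align*}
g_2 &= \lambda = 4(32\alpha^4 - 24\alpha^2\beta + 3\beta^2),\\
g_3 &= 4\beta^3 - \lambda\beta - \gamma^2 = -256\alpha^6 + 256\alpha^4\beta - 48\alpha^2\beta^2 - 8\beta^3.
\end{align*}
Since by hypothesis $\Delta = g_2^3 - 27 g_3^2 = 256\alpha^2(5\alpha^2 - 3\beta)(4\alpha^2 - 3\beta)^4 \neq 0$, the lattice $\langle 2\omega_1, 2\omega_2\rangle$ with these invariants is non-degenerate. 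As $\gamma^2 = 4\beta^3 - g_2\beta - g_3$ holds by construction, the pair $(\beta,\gamma)$ lies on the curve $y^2 = 4x^3 - g_2 x - g_3$, so there is a unique $z\not\equiv 0$ with $\wp(z)=\beta$ and $\wp'(z)=\gamma$, the value of $\gamma$ selecting $z$ rather than $-z$.

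The decisive step is to prove that this $z$ has order $4$. From $\wp'' = 6\wp^2 - \tfrac12 g_2$ and the value of $g_2$ one gets $\wp''(z) = -16\alpha^2(4\alpha^2 - 3\beta)$, while $\wp'(z) = \gamma = 4\alpha(4\alpha^2 - 3\beta)$. The factorisation of $\Delta$ guarantees $\alpha \neq 0$ and $4\alpha^2 - 3\beta \neq 0$, so $\wp'(z) \neq 0$ and $\wp''(z)/\wp'(z) = -4\alpha$. The duplication formula $\wp(2z) = \tfrac14\bigl(\wp''(z)/\wp'(z)\bigr)^2 - 2\wp(z)$ then yields $\wp(2z) = 4\alpha^2 - 2\beta$, and the crux is the polynomial identity
\[
4(4\alpha^2 - 2\beta)^3 - g_2(4\alpha^2 - 2\beta) - g_3 = 0,
\]
which I would check by substituting the expressions for $g_2, g_3$. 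It shows that $\wp(2z)$ is a root of $4t^3 - g_2 t - g_3$, i.e. $\wp'(2z) = 0$, so $2z$ is a half-period; together with $\wp'(z) \neq 0$ (whence $z$ is not $2$-torsion and $2z \neq 0$) this gives that $z$ has order exactly $4$.

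Finally I would normalise $z$ and match parameters. Writing $4z = d\,v$ with $v$ a primitive lattice vector and $d = \gcd$ of the coordinates of $4z$, the order-$4$ condition forces $d$ to be odd, hence $z \equiv \pm\tfrac14 v \pmod{\langle 2\omega_1, 2\omega_2\rangle}$; choosing $\pm v$ as a new generator $2\omega_1$ and using the replacement $\omega_1 \mapsto -\omega_1$ that interchanges the cases $k = 1, 3$ noted at the start of this section, we may take $z = \tfrac12\omega_1$. Since $g_2, g_3$ and $\sigma, \zeta, \wp$ depend only on the lattice and not on its generators, the function \eqref{fKr} built from this $z$ carries exactly the parameters $\alpha$, $\beta = \wp(z)$, $\gamma = \wp'(z)$, $\lambda = g_2$; by Corollary~\ref{corcor} it satisfies \eqref{feq} with the same constants as $f_4(x)$ and agrees with the expansion \eqref{mith} through order $x^5$, so Lemma~\ref{lemlem} identifies the two series. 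I expect the main obstacle to be exactly this last bookkeeping — passing from ``$2z$ is a half-period'' to the normalised form $z = \tfrac12\omega_1$, where one must genuinely use order exactly $4$ (the point may be half of a generator such as $\omega_1 + 2\omega_2$ rather than of $\omega_1$) — the polynomial identity itself being routine.
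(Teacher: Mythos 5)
Your proof is correct, and at the decisive step it takes a genuinely different route from the paper. The paper also starts by forming $g_2,g_3$ from \eqref{ab} and Corollary \ref{Kr}, but then argues via the order-$4$ division polynomial: it observes that $\wp(z)=\beta$ satisfies a degree-$6$ polynomial equation in $\wp(z)$ with coefficients in $g_2,g_3$, notes that this equation has at most $12$ solutions $z$ on the period parallelogram (since $\wp$ has order $2$), and identifies these with the $12$ points of order $4$, so that any $z$ with $\wp(z)=\beta$ is a point of order $4$; the reduction to $z=\tfrac12\omega_1$ by a change of generators is asserted parenthetically, exactly the step you spell out with the primitive-vector argument. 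You instead select $z$ uniquely from the full point $(\beta,\gamma)$ on the curve $y^2=4x^3-g_2x-g_3$ (using $\gamma\neq 0$, guaranteed by $\alpha\neq 0$ and $4\alpha^2-3\beta\neq 0$ from the factorization of $\Delta$) and prove order $4$ directly: $\wp''(z)/\wp'(z)=-4\alpha$, the duplication formula gives $\wp(2z)=4\alpha^2-2\beta$, and the identity
\[
4\bigl(4\alpha^2-2\beta\bigr)^3-g_2\bigl(4\alpha^2-2\beta\bigr)-g_3
=8\,(2\alpha^2-\beta)\Bigl[4(2\alpha^2-\beta)^2-(32\alpha^4-24\alpha^2\beta+3\beta^2)+(16\alpha^4-8\alpha^2\beta-\beta^2)\Bigr]=0
\]
shows $\wp'(2z)=0$, so $2z$ is a nonzero half-period and $z$ has order exactly $4$. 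What each approach buys: the paper's counting argument shows in passing that \emph{all} $12$ order-$4$ points realize the given $(\alpha,\beta)$-family, but it requires producing and trusting the degree-$6$ division polynomial; yours avoids that polynomial entirely, pins down $z$ exactly rather than up to a $12$-element set, and makes the generator-normalization rigorous, at the cost of invoking the duplication formula. Your closing identification of the two series via Corollary \ref{corcor} and Lemma \ref{lemlem} is the same mechanism the paper uses for the level-$2$ case, and is sound (indeed, once $(\alpha,\wp(z),\wp'(z),g_2)$ match, Corollary \ref{Kr} already identifies the expansions). As a small bonus you did not exploit: combining your relation $\wp''(z)/\wp'(z)=-4\alpha$ with the duplication formula $\zeta(2z)=2\zeta(z)+\tfrac12\,\wp''(z)/\wp'(z)$ shows $\alpha=\zeta(z)-\tfrac12\zeta(\omega_1)$, which is precisely the normalization of $\alpha$ required for \eqref{fKr} to satisfy the quasiperiodicity in the paper's definition of a level-$4$ function.
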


\begin{proof}
 From Corollary \ref{Kr} taking into account \eqref{ab} we get
 \begin{equation} \label{g234}
 g_2 = 4 (32 \alpha^4 - 24 \alpha^2 \beta + 3 \beta^2), \quad g_3 = - 8 (2 \alpha^2 - \beta) (16 \alpha^4 - 8 \alpha^2 \beta - \beta^2). 
 \end{equation}
 Let $\langle 2 \omega_1, 2 \omega_2 \rangle$ be the periods lattice of Weierstrass $\wp$-function with invariants $g_2$, $g_3$.
 Since $\Delta \ne 0$, the lattice is non-degenerate. 
 In this conditions, the value $\wp(z) = \beta$ is a solution of the equation
\[
 64 \wp(z)^6 - 80 g_2 \wp(z)^4 - 320 g_3 \wp(z)^3 - 20 g_2^2 \wp(z)^2 - 16 g_2 g_3 \wp(z) + g_2^3 - 32 g_3^2 = 0.
\] 
Since $\wp(z)$ is a function of order $2$, on the periods parallelogram with generators $(2 \omega_1, 2 \omega_2)$
this equation has not more than $12$ solutions $z$.
On the other hand, for a given lattice one can take any of the $12$ points of order $4$ as the pole $z$ of \eqref{fKr}
(the expression \eqref{fKr} with $z = {1 \over 2} \omega_1$
will be achieved by replacement of generators of the lattice).
Hence, for every such point and its corresponding expression \eqref{fKr},
the parameters $\alpha, \beta$ correspond uniquely.
\end{proof}

\begin{cor}[from lemma \ref{lem52} and from \cite{BN}] \label{cor41}
 Any solution of the $4$-th special Hirzebruch functional equation \eqref{Hir3} with initial conditions $f(0) = 0, f'(0) = 1$
 that is not odd is the elliptic function of level $4$.
\end{cor}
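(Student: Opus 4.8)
The plan is to deduce the statement directly from the classification of solutions cited from \cite{BN} together with the oddness criterion of Lemma \ref{lem52}, with no new computation required. Recall that \cite{BN} establishes the dichotomy that every solution of the $4$-th special Hirzebruch functional equation \eqref{Hir3} with $f(0) = 0$, $f'(0) = 1$ is \emph{either} an elliptic function of level $2$ \emph{or} an elliptic function of level $4$. Thus the content of the corollary reduces to separating these two alternatives by a single structural invariant, and the natural invariant is parity.

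First I would record that the elliptic function of level $2$ is odd. This is already noted in the text: it is the Jacobi elliptic sine, whose oddness follows from equation \eqref{fe1} for $N = 2$, and it persists in the degenerate specializations as well, since there $f_2$ reduces to $\sin$ or $\tanh$, both odd. Equivalently, this is the direction ``$n = 2 \Rightarrow$ odd'' of Lemma \ref{lem52}. Note that only this one direction of the ``if and only if'' is needed for the argument.

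Given these two facts the argument proceeds by contraposition. Let $f$ be a solution of \eqref{Hir3} with $f(0) = 0$, $f'(0) = 1$ that is not odd. By the dichotomy from \cite{BN}, the function $f$ is either an elliptic function of level $2$ or an elliptic function of level $4$. If $f$ were an elliptic function of level $2$, it would be odd, contradicting the hypothesis; hence $f$ is an elliptic function of level $4$, as claimed.

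I do not expect a substantial obstacle here, since the hard part, the classification of solutions into levels $2$ and $4$, is imported from \cite{BN}. The only point deserving care is that the oddness of the level $2$ function be available in full generality, including the degenerate cases admitted by the corollary; as noted above, this is immediate from the explicit form of $f_2$. Finally, observe that this statement is precisely the ``not odd'' counterpart of Corollary \ref{cor22}, so that the two together partition the solution set of \eqref{Hir3} according to parity into the elliptic functions of level $2$ and level $4$.
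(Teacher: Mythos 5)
Your proposal is correct and is exactly the argument the paper intends: the corollary is attributed to the dichotomy of \cite{BN} (every solution of \eqref{Hir3} is of level $2$ or level $4$) combined with the ``$n=2 \Rightarrow$ odd'' direction of Lemma \ref{lem52}, which is precisely your disjunctive-syllogism argument. Your additional care about oddness persisting in the degenerate cases is a sensible touch, but otherwise there is nothing to add.
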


We will need initial terms of the series expansion of $f_4(x)$:
\begin{equation} \label{ryad}
f_4(x) = x + \alpha x^2 + (\alpha^2 + \beta) {x^3 \over 2} - 5 \alpha (\alpha^2 - \beta) {x^4 \over 2} 
- (233 \alpha^4 - 186 \alpha^2 \beta - 3 \beta^2) {x^5 \over 40} + \ldots 
\end{equation}

\begin{lem}
On the periods parallelogram with generators $(2 \omega_1, 8 \omega_2)$ the function $f_4(x)$ can be expressed in the form
\begin{equation} \label{f4p1}
 f_4(x) =  { (\wp(x) - a_1) ( - {1 \over 2} \wp'(x) (\wp(x) - a_2) + \alpha (\wp(x) - a_3) (\wp(x) - a_4)) \over (\wp(x) - b_1) (\wp(x) - b_2) (\wp(x) - b_3) (\wp(x) - b_4)},
\end{equation}
where $a_k \ne b_k$.
\end{lem}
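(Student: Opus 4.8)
The plan is to view $f_4$ as a genuine elliptic function for the enlarged lattice $\Lambda = \langle 2\omega_1, 8\omega_2\rangle$ and to apply the construction of Section~4 (following \cite{WW}, 20.51), which writes any elliptic function as a rational expression in the Weierstrass functions $\wp,\wp'$ of its lattice. First I would confirm $\Lambda$-periodicity: from the quasi-periodicity of the Kronecker form \eqref{fKr} one computes $f_4(x+2\omega_1)=f_4(x)$ and $f_4(x+2\omega_2)=\epsilon_4 f_4(x)$ with $\epsilon_4$ a primitive fourth root of unity (the exponent reducing to $\pm\pi i/2$ via the Legendre identity), so that $f_4(x+8\omega_2)=\epsilon_4^4 f_4(x)=f_4(x)$. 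Reading the divisor off \eqref{fKr} on a fundamental domain of $\Lambda$, the factor $\sigma(x)$ contributes simple zeros at $0,2\omega_2,4\omega_2,6\omega_2$ and the factor $\sigma(z-x)$ simple poles at $p_j:=\tfrac12\omega_1+2j\omega_2$, $j=0,1,2,3$; thus $f_4$ has order $4$ with respect to $\Lambda$, and here $\wp,\wp'$ denote the Weierstrass functions of $\Lambda$.

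The core step is the decomposition $f_4=f_4^{\mathrm{e}}+f_4^{\mathrm{o}}$ into its even and odd parts under $x\mapsto -x$. Being even and elliptic, $f_4^{\mathrm{e}}$ is a rational function of $\wp$; being odd, $f_4^{\mathrm{o}}$ equals $\wp'$ times a rational function of $\wp$. Each part has simple poles at all eight points $\pm p_j$ (at each, exactly one of $f_4(x),f_4(-x)$ is infinite), and since the $p_j$ are not $2$-torsion points of $\Lambda$ and are distinct from the $-p_j$ modulo $\Lambda$, these eight points are distinct and the four values $b_k:=\wp(p_k)$ are distinct; hence both parts share the denominator $\prod_{k=1}^4(\wp-b_k)$. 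Matching orders at the origin then fixes the numerators: since $f_4^{\mathrm{o}}(x)=x+O(x^3)$ its numerator is $\wp'$ times a quadratic in $\wp$, with the overall factor $-\tfrac12$ forced by $f_4'(0)=1$; since $f_4^{\mathrm{e}}(x)=\alpha x^2+O(x^4)$ with $\alpha\ne 0$ by Lemma~\ref{talem1}, its numerator is a cubic in $\wp$ with leading coefficient $\alpha$. Finally, the zero set $\{0,2\omega_2,4\omega_2,6\omega_2\}$ of $f_4$ is stable under negation modulo $\Lambda$, so both $f_4^{\mathrm{e}}$ and $f_4^{\mathrm{o}}$ vanish at the pair $\pm 2\omega_2$; consequently both numerators carry the common factor $(\wp-\wp(2\omega_2))=(\wp-a_1)$, and adding the two parts yields exactly the displayed expression.

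It remains to note that $a_k\ne b_k$: the $a_k$ are $\wp$-values at zeros of $f_4$ while the $b_k$ are $\wp$-values at its poles, which are disjoint point sets in the non-degenerate case, so the displayed fraction is already in lowest terms. The main obstacle I anticipate is the divisor bookkeeping in the first two steps, namely verifying that the eight pole points $\pm p_j$ really are distinct and that the four numbers $b_k$ are distinct, since these facts are what force the even and odd parts to have order exactly $8$ and thereby pin down the numerator degrees (three, and $\wp'$ times a quadratic) and the denominator $\prod_{k=1}^4(\wp-b_k)$. Once this count is secured, the identification of the leading coefficients $\alpha$ and $-\tfrac12$ and of the common factor $(\wp-a_1)$ is routine, flowing directly from the normalization $f_4(x)=x+\alpha x^2+\cdots$ and the symmetry of the divisor.
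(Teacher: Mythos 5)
Your proposal is correct and takes essentially the same route as the paper: the even/odd decomposition of $f_4$ with respect to the lattice $\langle 2\omega_1, 8\omega_2\rangle$, identification of the common simple-pole divisor at the eight points $\pm\left(\tfrac12\omega_1 + 2j\omega_2\right)$, and determination of the numerators from the expansion at the origin (using $\alpha \ne 0$ from Lemma \ref{talem1}) together with the common zero at $\pm 2\omega_2$. The only slip is in your final remark: $a_2$ and $a_4$ are $\wp$-values at the \emph{extra} zeros of the odd and even parts respectively (the paper's points $q$ and $p$), not at zeros of $f_4$ itself; the conclusion $a_k \ne b_k$ still holds, because a zero of the even (resp.\ odd) part cannot coincide with one of its poles, which is exactly the pole bookkeeping you already carried out.
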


\begin{proof}
On the periods parallelogram with generators $(2 \omega_1, 8 \omega_2)$ the elliptic function $f_4(x)$ has order $4$,
zeros at $0$, $2 \omega_2$, $4 \omega_2$, $6 \omega_2$, and poles at
${1 \over 2} \omega_1$, ${1 \over 2} \omega_1 + 2 \omega_2$, ${1 \over 2} \omega_1 + 4 \omega_2$, ${1 \over 2} \omega_1 + 6 \omega_2$.

On the same periods parallelogram
the even elliptic function ${f_4(x) + f_4(-x) \over 2}$ has order $8$,
zeros at $0$, $2 \omega_2$, $4 \omega_2$, $6 \omega_2$, and two more points $p$ and $-p$.
At the points $0$, $4 \omega_2$ these zeros are double zeros.
It has simple poles at
${1 \over 2} \omega_1$, ${1 \over 2} \omega_1 + 2 \omega_2$, ${1 \over 2} \omega_1 + 4 \omega_2$, ${1 \over 2} \omega_1 + 6 \omega_2$,
${3 \over 2} \omega_1$, ${3 \over 2} \omega_1 + 2 \omega_2$, ${3 \over 2} \omega_1 + 4 \omega_2$, ${3 \over 2} \omega_1 + 6 \omega_2$.

The odd elliptic function ${f_4(x) - f_4(-x) \over 2}$ has order $8$ on the same periods parallelogram, zeros at
$0$, $2 \omega_2$, $4 \omega_2$, $6 \omega_2$, $\omega_1$, $\omega_1 + 4 \omega_2$, and two more points $q$, $- q$.
(Further we will get additionally that $\{q, - q\} = \{2 \omega_2, 6 \omega_2\}$,
and thus in the points $2 \omega_2$, $6 \omega_2$ the zeros are double zeros.)
It has simple poles in the same points as ${f_4(x) + f_4(-x) \over 2}$.

Hence, we have the form \eqref{f4p1} for $b_k = \wp({1 \over 2} \omega_1 + 2 (k-1) \omega_2)$,
$a_1 = \wp(2 \omega_2)$, $a_2 = \wp(q)$, $a_3 = \wp(4 \omega_2)$, $a_4 = \wp(p)$.
\end{proof}

\begin{lem}[on isogeny] \label{xxx}
Let the invariants $g_2, g_3$ of the lattice $\langle 2 \omega_1, 2 \omega_2 \rangle$ be expressed by formulas \eqref{g234}.
Then the invariants $\widetilde{g}_2, \widetilde{g}_3$ for one of the lattices
$\langle 2 \omega_1, 8 \omega_2 \rangle$, $\langle 8 \omega_1, 2 \omega_2 \rangle$, $\langle 8 \omega_1, 2 \omega_1 + 2 \omega_2 \rangle$
are expressed by formulas
\begin{equation} \label{tg}
 - 4 \widetilde{g}_2 = 13 \alpha^4 - 6 \alpha^2 \beta - 3 \beta^2, \quad 8 \widetilde{g}_3 = (\alpha^2 - \beta) (17 \alpha^4 - 14 \alpha^2 \beta + \beta^2).
\end{equation}
\end{lem}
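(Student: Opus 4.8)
The statement is an isogeny relation, so my plan is to work entirely within the framework of Section~4, where the invariants of a lattice and of an index-$2$ sublattice are connected by the formulas \eqref{iso}. The lattice $\langle 2\omega_1, 8\omega_2\rangle$ has index $4$ in $\langle 2\omega_1, 2\omega_2\rangle$, and it is the natural lattice of periodicity of $f_4$ (it is the one already appearing in \eqref{f4p1}). I would factor the degree-$4$ isogeny as a chain of two index-$2$ isogenies
\[
\langle 2\omega_1, 2\omega_2\rangle \supset \langle 2\omega_1, 4\omega_2\rangle \supset \langle 2\omega_1, 8\omega_2\rangle,
\]
and apply \eqref{iso} to each step. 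I denote by $(g_2^{(1)}, g_3^{(1)})$ the invariants of the intermediate lattice $\langle 2\omega_1, 4\omega_2\rangle$, by $a_1, a_2$ the parameters of the first step, and by $c_1, c_2$ the parameters of the second step.

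Concretely, the plan is as follows. First I substitute the values \eqref{g234} of $g_2, g_3$ into the pair
\[
g_2 = 4 (a_1 + 3 a_2)(3 a_1 + a_2), \qquad g_3 = - 8 (a_1 + a_2)(a_1 - a_2)^2
\]
and solve for $a_1, a_2$ as functions of $\alpha, \beta$; this fixes $(g_2^{(1)}, g_3^{(1)}) = (g_2 - 20 a_1 a_2,\ g_3 - 28 a_1 a_2 (a_1 + a_2))$. Feeding $(g_2^{(1)}, g_3^{(1)})$ into the same two formulas for the second step determines $c_1, c_2$, and then $\widetilde{g}_2 = g_2^{(1)} - 20 c_1 c_2$ and $\widetilde{g}_3 = g_3^{(1)} - 28 c_1 c_2 (c_1 + c_2)$. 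Carrying out the elimination should produce $\widetilde{g}_2, \widetilde{g}_3$ as polynomials in $\alpha, \beta$, and I expect them to coincide with \eqref{tg}. As an independent check I would verify the discriminant identity: computing $\widetilde{\Delta} = \widetilde{g}_2^3 - 27 \widetilde{g}_3^2$ from \eqref{tg} and comparing it, via the discriminant rules in \eqref{iso} applied twice, with $\Delta = 256 \alpha^2 (5\alpha^2 - 3\beta)(4\alpha^2 - 3\beta)^4$ of Lemma~\ref{talem3}.

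An alternative route, closer to the method used for the level-$2$ examples, is to expand the already-established expression \eqref{f4p1} at the origin and match it term by term against the series \eqref{ryad} for $f_4$; this matching simultaneously determines the auxiliary parameters $a_k, b_k$ and the invariants $\widetilde{g}_2, \widetilde{g}_3$ of $\langle 2\omega_1, 8\omega_2\rangle$, again yielding \eqref{tg}.

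The main obstacle in either approach is not a single hard idea but the multivaluedness flagged at the end of Section~4: equations \eqref{iso} determine the sublattice invariants only up to a finite choice, and the three index-$4$ sublattices $\langle 2\omega_1, 8\omega_2\rangle$, $\langle 8\omega_1, 2\omega_2\rangle$, $\langle 8\omega_1, 2\omega_1 + 2\omega_2\rangle$ correspond to distinct branches with distinct invariants. Thus the delicate point is to select the branch belonging to the lattice of periodicity of $f_4$; I would pin it down using that $z = \tfrac12\omega_1$ must remain a point of order $4$ (which holds for $\langle 2\omega_1, 8\omega_2\rangle$ but not for $\langle 8\omega_1, 2\omega_2\rangle$) and that $\wp(\tfrac12\omega_1; g_2, g_3) = \beta$ from Lemma~\ref{talem3}, which singles out the admissible values of $a_1, a_2, c_1, c_2$ and hence the correct sublattice. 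This is exactly why the statement only claims \eqref{tg} for \emph{one} of the three listed lattices: the assertion is the existence of a single admissible branch, and the remaining verification is a heavy but routine algebraic elimination.
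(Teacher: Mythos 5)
Your proof is correct, but it follows a genuinely different route from the paper's. The paper's entire proof is one substitution: Example \ref{rem45} already records the degree-$4$ isogeny data, expressing both the invariants $g_2, g_3$ of $\langle 2\omega_1, 2\omega_2\rangle$ and the invariants $\widetilde{g}_2, \widetilde{g}_3$ of $\langle 2\omega_1, 8\omega_2\rangle$ as polynomials in two parameters $a, b$ (extracted there from the level-$2$ function $f_2$ on the $(2\omega_1, 8\omega_2)$-parallelogram); setting $4a = -3\alpha^2 + \beta$, $2b = \alpha^2 - \beta$ reproduces \eqref{g234} on one side and \eqref{tg} on the other, so no equation ever has to be solved and the branch ambiguity is entirely absorbed by the lemma's phrase ``one of the lattices''. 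Your plan instead composes two index-$2$ steps via \eqref{iso}, and it does go through: writing $s = a_1 + a_2$, $p = a_1 a_2$, the relations \eqref{iso} reduce to $p = (g_2 - 12 s^2)/16$ together with the cubic $32 s^3 - 2 g_2 s + g_3 = 0$ (equivalently, $-2s$ is a root of $4x^3 - g_2 x - g_3$, i.e.\ a two-division value); with \eqref{g234} this cubic has a unique root rational in $\alpha, \beta$, namely $s = \beta - 2\alpha^2$ (the other two involve $\sqrt{5\alpha^2 - 3\beta}$), which gives the intermediate invariants
\begin{equation*}
g_2^{(1)} = 4\,(7\alpha^4 - 9\alpha^2\beta + 3\beta^2), \qquad g_3^{(1)} = 4\,(\alpha^2-\beta)(2\alpha^2-\beta)(3\alpha^2-2\beta),
\end{equation*}
and at the second step the corresponding cubic splits completely over $\mathbb{Q}[\alpha,\beta]$, the choice $c_1 + c_2 = (\alpha^2-\beta)/2$, $c_1 c_2 = (5\alpha^2 - 3\beta)^2/16$ yielding exactly \eqref{tg}. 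Two qualifications to your write-up: first, solving the first step does not by itself ``fix'' $(g_2^{(1)}, g_3^{(1)})$ --- there are three branches, one per index-$2$ sublattice, and you must take the rational one; second, the geometric identification of the branch with the actual period lattice $\langle 2\omega_1, 8\omega_2\rangle$ of $f_4$ (via the order of the point $z = \tfrac{1}{2}\omega_1$), while correct, is more than the lemma as stated requires, since it claims \eqref{tg} only for \emph{one} of the three listed lattices, so exhibiting branch choices that produce \eqref{tg} suffices. What your route buys is independence from the level-$2$ computation of Example \ref{rem45} and an explicit intermediate curve in the factorized isogeny; what the paper's route buys is brevity --- a two-line verification with no elimination at all.
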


\textsc{Proof} is obtained if in example \ref{rem45} we set
\[
 4 a = - 3 \alpha^2 + \beta, \qquad 2 b = \alpha^2 - \beta.
\]

\begin{thm} \label{thmain}
On the periods parallelogram with generators $(2 \omega_1, 8 \omega_2)$ we have
\begin{multline} \label{f4p}
f_4(x) = {1 \over 32} {\alpha  (4 \wp(x) + 3 \alpha^2 - \beta) (2 \wp(x) - \alpha^2+ \beta) (4 \wp(x) - 7 \alpha^2 + 5 \beta)
 \over \wp(x)^4+ c_1 \wp(x)^3 + c_2 \wp(x)^2 + c_3 \wp(x)  + c_4} - \\ - 
  {1 \over 32} { \wp'(x) (4 \wp(x) + 3 \alpha^2 - \beta)^2 \over \wp(x)^4+ c_1 \wp(x)^3 + c_2 \wp(x)^2 + c_3 \wp(x)  + c_4},
\end{multline}
where $c_1 = \alpha^2-\beta$, $c_2 = {51 \over 8} \alpha^4 - {21 \over 4} \alpha^2 \beta + {3 \over 8} \beta^2$,
$c_3 = - {1 \over 16} (\alpha^2 - \beta) (47 \alpha^4 - 34 \alpha^2 \beta - \beta^2)$,
$c_4 = {545 \over 256} \alpha^8 - {295 \over 64} \alpha^6 \beta + {435 \over 128} \alpha^4 \beta^2 - {55 \over 64} \alpha^2 \beta^3 + {1 \over 256} \beta^4$,
and the invariants $g_2$, $g_3$ of $\wp$-functions are expressed by formulas
\[
 g_2 = - {1 \over 4} (13 \alpha^4 - 6 \alpha^2 \beta - 3 \beta^2), \quad g_3 = {1 \over 8} (\alpha^2 - \beta) (17 \alpha^4 - 14 \alpha^2 \beta + \beta^2).
\]

\end{thm}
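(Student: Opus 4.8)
The plan is to start from the form \eqref{f4p1}, which the preceding Lemma already establishes on the periods parallelogram with generators $(2\omega_1, 8\omega_2)$, together with Lemma \ref{xxx}, which identifies the invariants of the corresponding $\wp$-function as the $g_2, g_3$ in the statement. What remains is to pin down the eight constants $a_1, a_2, a_3, a_4$ and $b_1, b_2, b_3, b_4$ (equivalently the coefficients $c_1, \dots, c_4$ of the quartic $\prod_k(\wp(x) - b_k)$) as explicit polynomials in $\alpha, \beta$. Separating the odd and even parts of \eqref{f4p1}, the coefficient of $\wp'(x)$ is $-\tfrac12(\wp(x)-a_1)(\wp(x)-a_2)$; matching it with the coefficient $-\tfrac{1}{32}(4\wp(x)+3\alpha^2-\beta)^2$ claimed in \eqref{f4p} forces $a_1 = a_2 = \tfrac14(\beta - 3\alpha^2)$. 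This is exactly the degeneration anticipated in the construction of \eqref{f4p1}, where $a_1 = \wp(2\omega_2)$, $a_2 = \wp(q)$ and $\{q,-q\} = \{2\omega_2, 6\omega_2\}$, so that $a_2 = \wp(6\omega_2) = \wp(2\omega_2) = a_1$.

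First I would determine the remaining constants by expanding both sides at the origin. Using the Weierstrass equation \eqref{wp} with the invariants $g_2, g_3$ from the statement I generate the Laurent expansions of $\wp(x)$ and $\wp'(x)$, substitute them into \eqref{f4p1}, and expand in $x$. The apparent pole at $x=0$ must cancel down to a simple zero with $f_4'(0)=1$, and the successive Taylor coefficients must agree with the expansion \eqref{ryad} of $f_4$; since \eqref{ryad} only lists terms through $x^5$, I would produce the further coefficients needed from the differential equation \eqref{feq} (with the level-$4$ coefficients $C_1 = -6\alpha$, $C_2 = 6\alpha^2 - 6\beta$, $C_3 = 30\alpha^3 - 18\alpha\beta$ obtained from \eqref{ab} and Corollary \ref{corcor}), exactly as the $x^6$ coefficient was computed in the proof of Lemma \ref{talem2}. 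Matching coefficients yields a polynomial system whose solution gives $a_3 = \tfrac12(\alpha^2-\beta)$, $a_4 = \tfrac14(7\alpha^2 - 5\beta)$ and the four coefficients $c_1, c_2, c_3, c_4$. As an independent check, $a_3 = \wp(4\omega_2)$ is a $2$-torsion value of the sublattice and must satisfy $4a_3^3 - g_2 a_3 - g_3 = 0$, which one verifies directly with the stated $g_2, g_3$; the $4$-torsion value $a_1$ and the pole values $b_k$ can likewise be cross-checked through the duplication formula.

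The cleanest way to package the argument, and the one I would ultimately write, is through the uniqueness Lemma \ref{lemlem}: once the candidate right-hand side $F(x)$ of \eqref{f4p} is written explicitly, it is a rational function of $\wp, \wp'$ with the prescribed periods, so it suffices to show that $F$ satisfies \eqref{feq} with the level-$4$ coefficients above and that its $5$-jet coincides with \eqref{ryad}. For the differential equation, every derivative of $F$ reduces, via \eqref{wp} and $\wp''(x) = 6\wp(x)^2 - \tfrac12 g_2$, to a rational expression in $\wp$ that is at most linear in $\wp'$ (higher powers of $\wp'$ being removed by $\wp'^2 = 4\wp^3 - g_2\wp - g_3$); substituting into \eqref{feq} and separating the part linear in $\wp'$ from the part free of $\wp'$ turns the equation into two polynomial identities in $\wp$, which one checks term by term. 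I expect the main obstacle to be precisely this algebra: the quartic denominator makes the intermediate rational expressions large, the correct $2$- and $4$-torsion branches must be selected to fix the signs and roots unambiguously, and one must carry the origin expansion to high enough order to determine all of $c_1, \dots, c_4$. Once both the differential equation and the $5$-jet are verified, Lemma \ref{lemlem} gives $F = f_4$, completing the proof.
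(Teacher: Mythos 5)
Your proposal is correct and takes essentially the same route as the paper: the paper's proof likewise reduces the theorem to Lemma \ref{lemlem} by verifying that the right-hand side of \eqref{f4p} satisfies the differential equation \eqref{feq} with $C_1=-6\alpha$, $C_2=6(\alpha^2-\beta)$, $C_3=6\alpha(5\alpha^2-3\beta)$ and that its expansion at the origin agrees with \eqref{ryad} through $x^5$. The only difference is mechanical rather than conceptual: the paper delegates these two verifications to a computer algebra system (see \S\ref{secproof}), whereas you sketch how to perform them by hand by splitting off the part linear in $\wp'(x)$ and using \eqref{wp}.
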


\begin{proof} 
The expression \eqref{f4p} satisfies the differential equation \eqref{feq}
for $C_1 = - 6 \alpha$, $C_2 = 6 (\alpha^2- \beta)$, $C_3 = 6 \alpha (5 \alpha^2 - 3 \beta)$.
For the right hand side of \eqref{f4p} 
initial terms of the series expansion at $x = 0$ up to $x^5$ coincide with \eqref{ryad}. 
Thus, according to Lemma \ref{lemlem} the Theorem is proven. See \S \ref{secproof} for the calculations.
\end{proof}

\begin{thm} \label{Tmain}
The elliptic function of level $4$ is the exponential of the universal formal group of the form
\[
 F(u,v)=\frac{u^2 A(v) -v^2 A(u)}{u B(v)-v B(u)}, 
\]
where $A(0) = B(0) = 1, B'(0) = 0, A''(0) = 0$, and the relation holds
\begin{equation} \label{relAB}
(2 B(u) + 3 A_1 u)^2 = 4 A(u)^3 - (3 A_1^2 - 8 B_2) u^2 A(u)^2. 
\end{equation}
\end{thm}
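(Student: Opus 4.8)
The plan is to verify the algebraic relation \eqref{relAB} directly for the series $A$ and $B$ attached to the exponential $f_4$, and then to read off universality from the parameter count. Since the coefficients of \eqref{feq} for $f_4$ are $C_1=-6\alpha$, $C_2=6(\alpha^2-\beta)$ (Theorem \ref{thmain}), comparing with the values $C_1=-3A_1$, $C_2=A_1^2-4B_2$ from the lemma establishing \eqref{AB} gives $A_1=2\alpha$ and $B_2=\tfrac12(3\beta-\alpha^2)$; the normalizations $A(0)=B(0)=1$, $A''(0)=B'(0)=0$ are already built into the Buchstaber form. Because $u=f_4(x)$ is an invertible power series, \eqref{relAB} is equivalent to the identity obtained by substituting $u=f_4(x)$, which I would check as an identity in $x$ by means of \eqref{AB}.

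The key simplification comes from the Baker--Akhiezer picture. Writing $f_4=e^{\alpha x}/\Phi$ and inserting the derivative formulas from the proof of Corollary \ref{corcor} into the second line of \eqref{AB}, the Lam\'e equation \eqref{Lame} collapses everything to the clean expression
\[
A(f_4(x))=f_4(x)^2\,(\wp(x)-\beta),\qquad \beta=\wp(z),\quad z=\tfrac12\omega_1 .
\]
Differentiating this and eliminating $f_4'''$ by \eqref{feq} yields the first-order relation $A(f_4)'=2\alpha A(f_4)-\gamma f_4^2$ with $\gamma=\wp'(z)=4\alpha(4\alpha^2-3\beta)$; solving it for $f_4'$ and using $2B(f_4)+3A_1f_4=2(f_4'+\alpha f_4)$ gives
\[
2B(f_4)+3A_1f_4=f_4\,\frac{4\alpha(\wp+2\beta-4\alpha^2)-\wp'}{\wp-\beta}.
\]
Substituting these two expressions, together with $3A_1^2-8B_2=4(4\alpha^2-3\beta)$, into \eqref{relAB} makes the common powers of $f_4$ and of $(\wp-\beta)$ cancel, and the entire relation reduces to the single identity
\[
f_4^{\,4}=\frac{\bigl(4\alpha(\wp+2\beta-4\alpha^2)-\wp'\bigr)^2}{4\,(\wp-\beta)^4\,(\wp+2\beta-4\alpha^2)}. \tag{$\star$}
\]

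The heart of the proof is $(\star)$, which I would establish by a divisor count on $\Lambda=\langle 2\omega_1,2\omega_2\rangle$. Since $f_4(x+2\omega_1)=f_4(x)$ and $f_4(x+2\omega_2)=\epsilon_4 f_4(x)$ with $\epsilon_4^4=1$, the function $f_4^{\,4}$ is elliptic on $\Lambda$, with a zero of order $4$ at the origin and a pole of order $4$ at $z=\tfrac12\omega_1$. I will show the right-hand side of $(\star)$ has the same divisor. The essential point is that $4\alpha^2-2\beta$ is a root of $4t^3-g_2t-g_3$ — a one-line check using the invariants \eqref{g234} — so $\wp+2\beta-4\alpha^2$ has a \emph{double} zero at the corresponding half-period; meanwhile the numerator $N=4\alpha(\wp+2\beta-4\alpha^2)-\wp'$ vanishes to second order at $-z$ (indeed $N(-z)=N'(-z)=0$, using \eqref{ab} and \eqref{g234}) and to first order at that half-period. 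These cancellations remove every apparent pole of the right-hand side except the order-$4$ pole at $z$, so equality of divisors forces the two sides to agree up to a constant, which the leading behaviour $f_4^{\,4}\sim x^4$ pins to $1$. The main obstacle is precisely this bookkeeping: verifying the branch-value identity and the exact orders of vanishing of $N$, so that the naive degree $6$ of the right-hand side drops to the correct $4$.

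Finally, for universality I would argue that \eqref{relAB}, together with the normalizations and the associativity constraints that make \eqref{Buch} a formal group — whose coefficient ring is described in \cite{BU} — leaves exactly the two free parameters $A_1,B_2$, in bijection with $(\alpha,\beta)$; hence the universal formal group subject to these conditions has $f_4$ as its exponential, and the invertibility of $u=f_4(x)$ together with Lemma \ref{lemlem} shows the correspondence is exact. The same conclusion can be reached through Corollary \ref{cor41}: a Buchstaber exponential whose $A,B$ satisfy \eqref{relAB} solves the $4$-th special Hirzebruch equation \eqref{Hir3} and is not odd, hence is the elliptic function of level $4$.
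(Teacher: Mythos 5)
Your verification of the identity \eqref{relAB} for $f_4$ is correct, and it is a genuinely different route from the paper's. The paper first proves Theorem \ref{thmain} (the expression \eqref{f4p} for $f_4$ through $\wp$ on the sublattice $\langle 2\omega_1, 8\omega_2\rangle$), then substitutes \eqref{AB} into \eqref{relAB} to obtain the differential relation \eqref{End}, and checks \eqref{End} on the expression \eqref{f4p} by computer algebra (\S\ref{secproof}). You work instead on the original lattice $\Lambda=\langle 2\omega_1,2\omega_2\rangle$. I checked your two closed forms: inserting the Lam\'e equation \eqref{Lame} into the second line of \eqref{AB} (with $A_1=2\alpha$, $2B_2=3\beta-\alpha^2$, which follow correctly from comparing the coefficients of \eqref{feq}) does give $A(f_4(x))=f_4(x)^2(\wp(x)-\beta)$, and \eqref{log} gives
\[
2B(f_4)+3A_1f_4=2\bigl(f_4'+\alpha f_4\bigr)=f_4\,\frac{4\alpha(\wp-e_1)-\wp'}{\wp-\beta},\qquad e_1:=4\alpha^2-2\beta ,
\]
so \eqref{relAB} reduces to $f_4^{\,4}=N^2/\bigl(4(\wp-\beta)^4(\wp-e_1)\bigr)$ with $N=4\alpha(\wp-e_1)-\wp'$. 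The divisor count is also sound: $e_1$ is a root of $4t^3-g_2t-g_3$ for the invariants \eqref{g234} (equivalently $\wp(\omega_1)=e_1$, by duplication with $\wp''(z)/\wp'(z)=-4\alpha$); $N$ has order $3$ with exactly a double zero at $-z$ and a simple zero at $\omega_1$ (here $\alpha\neq0$ and $4\alpha^2-3\beta\neq0$ by Lemmas \ref{talem1}, \ref{talem3} are needed); hence both sides have divisor $4\cdot(0)-4\cdot(z)$ on $\Lambda$ and the constant is $1$. This yields a conceptual, computer-free proof of the key identity. One sentence should be added: the divisor argument requires $\Delta\neq0$, while the elliptic function of level $4$ is defined for all $(\alpha,\beta)$; since all series coefficients involved are polynomials in $(\alpha,\beta)$, the identity extends to the degenerate locus by Zariski density.

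The gap is in universality. The assertion that \eqref{relAB}, the normalizations, and associativity ``leave exactly the two free parameters $A_1,B_2$'' is precisely what must be proven; it cannot be read off from \cite{BU}, which treats the unconstrained form \eqref{Buch}. Your plan to finish with Lemma \ref{lemlem} does not close this: to apply that lemma to the exponential $f$ of an arbitrary formal group of the constrained form, you must first know that the constraints determine $C_3=6(A_3-B_3)$ and the expansion of $f$ up to $x^5$ from $A_1,B_2$ alone; but \eqref{relAB} at order $u^3$ yields only the single relation $8B_3-12A_3=-2A_1^3+4A_1B_2$, which fixes neither $A_3$ nor $B_3$, and ``associativity'' is invoked without any computation. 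The alternative through Corollary \ref{cor41} is circular: you have no independent argument that the exponential of a formal group of the constrained form satisfies \eqref{Hir3} --- knowing that is tantamount to knowing it equals $f_4$, which is the claim being proven.

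The repair is short and is exactly the second half of the paper's proof: substituting \eqref{AB} into \eqref{relAB} turns the constraints into the single differential equation \eqref{End}, valid for the exponential of any formal group of the constrained form; and \eqref{End} with $f(0)=0$, $f'(0)=1$ has a unique power-series solution once $A_1,B_2$ are fixed, because its coefficient at $x^k$ reads $(k+1)(3k-8)f_k=P_k(f_1,\dots,f_{k-1})$ and $3k-8\neq0$ for all $k\geqslant3$. Since your first part shows that $f_4$ with $A_1=2\alpha$, $3\beta=\alpha^2+2B_2$ is a solution, it is the only one, and universality follows.
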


\begin{proof}
For a formal group \eqref{Buch} with exponential $f_4(x)$ in relations \eqref{AB} we have
$A_1 = 2 \alpha$, $- 2 B_2 = \alpha^2 - 3 \beta$.
Substituting these relations into \eqref{relAB} for $u = f(x)$ we obtain the relation
\begin{multline} \label{End}
4 (2 f'(x) + A_1 f(x))^2 = \\ = 
(4 f'(x)^2  - 2 f(x) f''(x) - 2 A_1 f(x) f'(x) - (3 A_1^2 - 4 B_2) f(x)^2) \cdot \\ \cdot
(2 f'(x)^2 - f(x) f''(x) - A_1 f(x) f'(x) - 2 B_2 f(x)^2)^2.
\end{multline}
This relation for $f(x) = f_4(x)$ follows from Theorem \ref{thmain}.
Namely, Theorem \ref{thmain} gives the expression \eqref{f4p} for $f_4(x)$ in terms of $\wp(x)$ and $\wp'(x)$.
The substitution of this expression into \eqref{End} taking into account the relations \eqref{wp} gives identity. More details in \S \ref{secproof}.

Thus, we have shown that for $f_4(x)$ the formal group has the form specified.
The universality results from the fact that the equation \eqref{End}
has a unique solution $f(x)$ with initial conditions $f(0) = 0$, $f'(0) = 1$ for given $A_1$, $B_2$.
Indeed, for $f(x) = x + \sum_{k=2}^\infty f_{k-1} x^k$ from \eqref{End} we obtain $A_1 = 2 f_1, 
B_2 = - 2 f_1^2 + 3 f_2$, and for $k \geqslant 3$ at $x^k$:
$(k+1) (3 k - 8) f_k = P_k(f_1, \ldots, f_{k-1})$
for some polynomials $P_k$.
Therefore $f(x) = f_4(x)$ with $2 \alpha = A_1$, $3 \beta = \alpha^2 + 2 B_2$.
\end{proof}

For topological applications it is important to study the coefficient ring of the formal group from theorem \ref{Tmain}.
In particular, the question to proove the absence of torsion in this ring remains open.
For elliptic functions of level $2$ and $3$ the coefficient rings of corresponding formal groups are described in \cite{BBU}.

\section{Computational proofs of theorems \ref{thmain} and \ref{Tmain} assertions} \label{secproof}

Listed below assertions from the proofs of theorems \ref{thmain} and \ref{Tmain} can be checked directly. 
Here $f_4(x)$ is given by formula \eqref{f4p} with the parameters $c_1, c_2, c_3, c_4, g_2, g_3$ determined by the statement of theorem \ref{thmain}.
Thus, $f_4(x)$ is determined as a rational function in $\wp(x)$ and $\wp'(x)$, depending on two parameters $\alpha$, $\beta$.
From Weierstrass equation \eqref{wp}
follow expressions for the higher derivatives of Weierstrass $\wp$-function in $\wp(x)$ and $\wp'(x)$:
\[
 \wp''(x) = 6 \wp(x)^2 - {1 \over 2} g_2, \quad \wp'''(x) = 12 \wp(x) \wp'(x), \quad \ldots 
\]
Hence the functions $f_4'(x), f_4''(x), f_4'''(x)$
are also expressed as rational functions in $\wp(x)$ and $\wp'(x)$, depending on two parameters $\alpha$, $\beta$.

Thus, assertions stated below are reduced to verification of relations
on rational functions in $\wp(x)$ and $\wp'(x)$ with relation \eqref{wp}.
However, direct calculations prove to be extremely cumbersome and difficult to verify.
Therefore each of these assertions has been verified by computer algebra system Maple 2015.
After each assertion we give the code checking this assertion.
A similar check can be done in many other computer algebra systems.

In the code below the function $\mathtt{f(x)} = f_4(x)$ should be defined by formula \eqref{f4p}, where $\wp(x) = \mathtt{WeierstrassP(x, g_2, g_3)}$,
$\wp'(x) = \mathtt{WeierstrassPPrime(x, g_2, g_3)}$, and the parameters  $c_1$, $c_2$, $c_3$, $c_4$, $g_2$, $g_3$
are defined according to the statement of Theorem \ref{thmain} in terms of parameters $\alpha$, $\beta$.

\begin{utv}
The expression \eqref{f4p} satisfies the differential equation \eqref{feq}
for $C_1 = - 6 \alpha$, $C_2 = 6 (\alpha^2- \beta)$, $C_3 = 6 \alpha (5 \alpha^2 - 3 \beta)$.
\end{utv}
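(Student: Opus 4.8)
The constants $C_1 = -6\alpha$, $C_2 = 6(\alpha^2-\beta)$, $C_3 = 6\alpha(5\alpha^2-3\beta)$ are precisely those produced by Corollary \ref{corcor} for $f_{Kr}(x)$ once the level-$4$ relation $\gamma = 4\alpha(4\alpha^2-3\beta)$ of \eqref{ab} is imposed, so the assertion is that the explicit Weierstrass expression \eqref{f4p} satisfies that very same equation. The plan is to verify this directly, treating the right-hand side of \eqref{f4p} as a rational function of $\wp(x)$ and $\wp'(x)$ and reducing the verification to the vanishing of a pair of polynomials in $\wp(x)$ alone. First I would write \eqref{f4p} in the normal form
\[
 f_4(x) = \frac{P(\wp(x)) + \wp'(x)\, Q(\wp(x))}{D(\wp(x))},
\]
where $D(\wp) = \wp^4 + c_1 \wp^3 + c_2 \wp^2 + c_3 \wp + c_4$, the polynomial $P(\wp) = \frac{\alpha}{32}(4\wp + 3\alpha^2 - \beta)(2\wp - \alpha^2 + \beta)(4\wp - 7\alpha^2 + 5\beta)$ is cubic, and $Q(\wp) = -\frac{1}{32}(4\wp + 3\alpha^2 - \beta)^2$ is quadratic.

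Next I would compute $f_4'(x)$, $f_4''(x)$, $f_4'''(x)$ by repeated differentiation, using only the Weierstrass relations $\frac{d}{dx}\wp(x) = \wp'(x)$, $\wp''(x) = 6\wp(x)^2 - \frac{1}{2}g_2$ and $\wp'''(x) = 12\wp(x)\wp'(x)$. By induction each derivative is again of the shape $\big(\tilde P(\wp) + \wp'\,\tilde Q(\wp)\big)/D(\wp)^{k}$ with $\tilde P,\tilde Q$ polynomials in $\wp$, the denominator power $k$ growing by one at each step. Substituting these into the two sides of \eqref{feq} with the stated $C_1,C_2,C_3$ and clearing the common denominator (a fixed power of $D(\wp)$) turns the claim into a single rational identity in $\wp(x)$ and $\wp'(x)$.

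To finish, I would reduce this identity modulo the Weierstrass equation \eqref{wp}, replacing every even power of $\wp'(x)$ by means of $\wp'(x)^2 = 4\wp(x)^3 - g_2\wp(x) - g_3$, so that the difference of the two sides acquires the canonical form $\mathcal A(\wp(x)) + \wp'(x)\,\mathcal B(\wp(x))$ with $\mathcal A,\mathcal B$ polynomials in $\wp(x)$ whose coefficients, after substituting the explicit $c_1,\dots,c_4,g_2,g_3$ from Theorem \ref{thmain}, are polynomials in $\alpha,\beta$. Since $1$ and $\wp'(x)$ are linearly independent over the field of rational functions in $\wp(x)$, the differential equation holds if and only if $\mathcal A \equiv 0$ and $\mathcal B \equiv 0$ identically in $\wp$; this amounts to the finite check that every coefficient of $\mathcal A$ and $\mathcal B$ vanishes as a polynomial in $\alpha$ and $\beta$.

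The main obstacle is purely computational: after three differentiations and clearing $D(\wp)^{k}$, the polynomials $\mathcal A$ and $\mathcal B$ are of very high degree in $\wp$ with high-degree coefficients in $\alpha,\beta$, so a hand verification is prohibitively cumbersome. The conceptual content is entirely fixed by the reduction above — only the vanishing of two explicit polynomials remains — and in practice this vanishing is confirmed by the computer-algebra computation recorded in \S\ref{secproof}.
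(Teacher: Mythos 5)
Your proposal is correct and follows essentially the same route as the paper: both reduce the claim to an identity among rational functions of $\wp(x)$ and $\wp'(x)$ using the relations $\wp''=6\wp^2-\tfrac12 g_2$, $\wp'''=12\wp\wp'$, and \eqref{wp}, and both delegate the resulting (prohibitively cumbersome) polynomial vanishing check to a computer-algebra computation as in \S\ref{secproof}. Your added observations — that the constants match Corollary \ref{corcor} under \eqref{ab}, and that the final identity splits via linear independence of $1$ and $\wp'(x)$ into two polynomial identities $\mathcal{A}\equiv 0$, $\mathcal{B}\equiv 0$ in $\wp$ — are sound refinements of the same argument, not a different method.
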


\texttt{> simplify(}
$\mathtt{f(x) f'''(x) - 3 f'(x) f''(x)}$ \\
\text{ } \hspace{12.5pt} $\mathtt{- 6 ( - \alpha f'(x)^2 + (\alpha^2 - \beta) f(x) f'(x) + \alpha (5 \alpha^2 - 3 \beta) f(x)^2) = 0}$
\texttt{)}

\begin{utv}
For the right hand side of \eqref{f4p}
initial terms of the series expansion at $x = 0$ up to $x^5$ coincide with \eqref{ryad}. 
\end{utv}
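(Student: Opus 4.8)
The plan is to compute the Laurent expansion of the right-hand side of \eqref{f4p} at the origin directly and match it term by term with \eqref{ryad}. Everything in \eqref{f4p} is a rational expression in $\wp(x)$ and $\wp'(x)$, so I would begin with the standard expansions
\[
\wp(x) = x^{-2} + \frac{g_2}{20}\,x^2 + \frac{g_3}{28}\,x^4 + \cdots, \qquad \wp'(x) = -2\,x^{-3} + \frac{g_2}{10}\,x + \frac{g_3}{7}\,x^3 + \cdots,
\]
whose coefficients are determined as polynomials in $g_2, g_3$ by \eqref{nu} and \eqref{wp}, and then substitute the values of $g_2$ and $g_3$ prescribed in Theorem \ref{thmain}, so that both series become Laurent series in $x$ with coefficients polynomial in $\alpha$ and $\beta$.

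Next I would expand the numerator and the denominator of \eqref{f4p} as Laurent series. Since $\wp$ has a double pole, the denominator $\wp(x)^4 + c_1\wp(x)^3 + \cdots$ behaves like $x^{-8}$, while the numerator behaves like $x^{-7}$, its leading term $32\,x^{-7}$ coming from the summand $-\tfrac{1}{32}\wp'(x)\,(4\wp(x)+3\alpha^2-\beta)^2$. Dividing the two Laurent series, I would check that the terms of order $x^{-1}$ and below, together with the constant term, cancel, leaving a series of the form $x + O(x^2)$, and then read off the coefficients of $x^2,\dots,x^5$ and compare them with \eqref{ryad}. A short parity count (the numerator splits into an even part and an odd $\wp'$-part, and the denominator is even) shows that reaching the coefficient of $x^5$ requires the expansions of $\wp$ and $\wp'$ only through the $x^2$- and $x^3$-terms written above; thus only $g_2$ and $g_3$ enter and the computation is finite and purely mechanical.

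I would also exploit one simplification. By the preceding Proposition the expression \eqref{f4p} satisfies \eqref{feq} with $C_1 = -6\alpha$, $C_2 = 6(\alpha^2-\beta)$, $C_3 = 6\alpha(5\alpha^2-3\beta)$; as in the proof of Lemma \ref{lemlem}, these three constants already force the coefficients of $x^2$, $x^3$, $x^4$ to equal $\alpha$, $\tfrac12(\alpha^2+\beta)$ and $-\tfrac52\alpha(\alpha^2-\beta)$, in agreement with \eqref{ryad}. At $x^5$ the recursion of Lemma \ref{lemlem} degenerates (the factor $k-4$ vanishes for $k=4$), so the coefficient of $x^5$ is \emph{not} fixed by the differential equation and must be obtained from \eqref{f4p} independently. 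Hence the genuine content of the Proposition is the single verification that the coefficient of $x^5$ in the expansion of \eqref{f4p} equals $-\tfrac{1}{40}(233\alpha^4 - 186\alpha^2\beta - 3\beta^2)$; once this holds, \eqref{f4p} and $f_4(x)$ share their jets through $x^5$ and coincide by Lemma \ref{lemlem}. The main obstacle is not conceptual but the sheer size of the intermediate Laurent series carried through the quartic denominator, which is precisely why the identity is confirmed by the computer algebra verification that follows.
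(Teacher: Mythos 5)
Your proposal is correct, and at its core it is the same verification the paper performs: the paper's entire proof is the one-line computer-algebra check \texttt{simplify(series(f(x), x, 6))}, i.e.\ a brute-force expansion of \eqref{f4p} through $x^5$ via the Laurent series of $\wp$ and $\wp'$, compared against \eqref{ryad}. What you add is a genuine structural reduction the paper does not exploit: invoking the preceding Proposition (that \eqref{f4p} satisfies \eqref{feq} with $C_1=-6\alpha$, $C_2=6(\alpha^2-\beta)$, $C_3=6\alpha(5\alpha^2-3\beta)$) together with the recursion displayed in the proof of Lemma \ref{lemlem}, you observe that once the quotient of Laurent series is known to begin with $x$, the coefficients of $x^2$, $x^3$, $x^4$ are forced to equal $\alpha$, $\tfrac12(\alpha^2+\beta)$, $-\tfrac52\alpha(\alpha^2-\beta)$, while the vanishing factor $(k-4)$ at $k=4$ means the coefficient of $x^5$ is precisely the one datum the differential equation leaves free; so the genuine content of the Proposition is a single coefficient identity. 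This is accurate, and it also explains \emph{why} Lemma \ref{lemlem} demands agreement exactly through $x^5$. Two small remarks: the cancellation of negative powers and the constant term that you propose to ``check'' is automatic, since the combined numerator of \eqref{f4p} has a pole of exact order $7$ with leading coefficient $32$ and the denominator a pole of exact order $8$ with leading coefficient $1$, so the quotient necessarily starts with $x$; and your truncation bound is slightly conservative --- carrying the computation through shows that only the $g_2$-terms of $\wp$ and $\wp'$ can reach the coefficient of $x^5$, the $g_3$-terms first entering at order $x^6$. In sum, the paper's route buys a uniform machine verification of all coefficients at once; yours buys insight and a computation small enough to do by hand.
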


\texttt{> simplify(series($\mathtt{f(x)}$, $\mathtt{x}$, $\mathtt{6}$))}

\begin{utv}
The expression \eqref{f4p} satisfies the differential equation \eqref{End}
for $A_1 = 2 \alpha$, $- 2 B_2 = \alpha^2 - 3 \beta$.
\end{utv}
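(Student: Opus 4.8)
The plan is to verify the assertion by direct substitution, turning the order-two relation \eqref{End} into an identity between elliptic functions and then into a polynomial identity in $\wp$. The key structural fact is that, once the invariants $g_2,g_3$ are fixed as in Theorem \ref{thmain}, the function $f_4$ given by \eqref{f4p} and all of its derivatives lie in the field $\mathbb{C}(\wp,\wp')$: differentiating \eqref{f4p} and repeatedly replacing $\wp''(x)=6\wp(x)^2-\tfrac12 g_2$ and $\wp'''(x)=12\wp(x)\wp'(x)$ never leaves this field. Hence each of $f_4,f_4',f_4''$ can be written in the canonical form
\[
 f_4^{(j)} = \frac{P_j(\wp)+\wp'\,Q_j(\wp)}{D(\wp)^{j+1}}, \qquad D(\wp)=\wp^4+c_1\wp^3+c_2\wp^2+c_3\wp+c_4,
\]
with $P_j,Q_j\in\mathbb{Q}[\alpha,\beta][\wp]$, the denominator being a power of the quartic appearing in \eqref{f4p}.

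First I would compute $f_4'$ and $f_4''$ in this form, keeping $\alpha,\beta$ symbolic and substituting the prescribed $g_2,g_3$. Next I would assemble the two factors of the right-hand side of \eqref{End}, namely $S=2f_4'^2-f_4 f_4''-A_1 f_4 f_4'-2B_2 f_4^2$ and $2S-(3A_1^2-8B_2)f_4^2$, together with the left-hand side $4(2f_4'+A_1 f_4)^2$, after setting $A_1=2\alpha$ and $-2B_2=\alpha^2-3\beta$. Reducing every occurrence of $\wp'^2$ by the Weierstrass equation $\wp'^2=4\wp^3-g_2\wp-g_3$ collapses each side to the normal form $P(\wp)+\wp'\,Q(\wp)$, and by the standard fact that an elliptic function on $\langle 2\omega_1,8\omega_2\rangle$ is uniquely a rational function of the corresponding $\wp$ and $\wp'$, two such functions agree exactly when these normal forms do. Thus \eqref{End} is equivalent to the pair of polynomial identities in $\wp$ obtained by matching the $\wp'$-free and the $\wp'$-linear parts, each with coefficients in $\mathbb{Q}[\alpha,\beta]$; establishing these identities proves the Proposition.

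The hard part will be bookkeeping rather than ideas. Since \eqref{End} is cubic in the jet $(f_4,f_4',f_4'')$, clearing the common denominator to $D(\wp)^{12}$ yields polynomials in $\wp$ of degree on the order of fifty whose coefficients are themselves sizeable polynomials in $\alpha,\beta$, so a by-hand reduction is infeasible and error-prone; I would therefore carry out the final simplification in a computer algebra system, exactly as recorded in the accompanying \texttt{simplify} command. As a cheap independent check beforehand, one can substitute the series \eqref{ryad} into \eqref{End} and confirm that, with $A_1=2\alpha$ and $B_2=\tfrac12(3\beta-\alpha^2)$, the relation holds to low order in $x$; by Lemma \ref{lemlem} these initial terms already determine $f_4$ among solutions of \eqref{feq}, which guards against an error in the chosen values of $A_1,B_2$ before the full symbolic computation is attempted.
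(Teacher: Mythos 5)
Your proposal is correct and takes essentially the same route as the paper: both reduce \eqref{End} to an identity between rational functions of $\wp$ and $\wp'$ (using \eqref{wp} to eliminate higher derivatives and stray powers of $\wp'$) and then delegate the cumbersome polynomial verification to a computer algebra system, exactly as in the paper's Maple \texttt{simplify} check. The preliminary low-order series check you add against \eqref{ryad} is a harmless extra safeguard, not a different method.
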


\texttt{> simplify(}
$\mathtt{4 (2 f'(x) + 2 \alpha f(x))^2}$ \\
\text{ } \hspace{12.5pt} $\mathtt{- (4 f'(x)^2 - 2 f(x) f''(x) - 4 \alpha f(x) f'(x) - (14 \alpha^2 - 6 \beta) f(x)^2)}$\\
\text{ } \hspace{12.5pt} $\mathtt{(2 f'(x)^2 - f(x) f''(x) - 2 \alpha f(x) f'(x) + (\alpha^2 - 3 \beta) f(x)^2)^2 = 0}$
\texttt{)}

\end{document}